\newcommand{\nc}{\newcommand}
\nc{\parent}[1]{$[\![#1]\!]$}
\newtheorem{theorem}{Theorem}[section]
\newtheorem{lemma}{Lemma}[section]
\newtheorem{example}{Example}[section]
\newtheorem{corollary}{Corollary}[section]
\newtheorem{proposition}{Proposition}[section]
\newtheorem{remark}{Remark}[section]
\newtheorem{definition}{Definition}[section]
\newtheorem{assumption}{Assumption}[section]
\newenvironment{pf-main}{{\bf \sc Proof of Theorem \ref{mainresult}.}\hspace{3mm}}{\qed}
\nc{\bfE}{\mathbf{E}} \nc{\bfT}{\mathbf{T}}\nc{\sE}{{\mathscr
E}} 
\nc{\cadlag}{c\`{a}dl\`{a}g } \nc{\ba}{\begin{array}}
\nc{\ea}{\end{array}} \nc{\be}{\begin{equation}}
\nc{\ee}{\end{equation}} \nc{\bea}{\begin{eqnarray}}
\nc{\eea}{\end{eqnarray}} \nc{\bean}{\begin{eqnarray*}}
\nc{\eean}{\end{eqnarray*}} \nc{\bu}{\bullet} \nc{\nn}{\nonumber} \nc{\sB}{{\mathscr B}}
\nc{\cA}{{\mathcal A}} \nc{\cB}{{\mathcal B}} \nc{\cC}{{\mathcal C}}
\nc{\cD}{{\mathcal D}} \nc{\bbD}{\mathbb{D}}\nc{\bbH}{\mathbb{H}}
\nc{\bbF}{\mathbb{F}}\nc{\bbG}{\mathbb{G}}\nc{\cG}{{\mathcal G}} \nc{\cF}{{\mathcal F}}
\nc{\cS}{{\mathcal S}} \nc{\cU}{{\mathcal U}} \nc{\cH}{{\mathcal H}}
\nc{\cK}{{\mathcal K}} \nc{\cL}{{\mathcal L}} \nc{\cM}{{\mathcal M}}
\nc{\cO}{{\mathcal O}} \nc{\cP}{{\mathcal P}} \nc{\bbE}{\mathbb{E}}
\nc{\bbEQ}{\mathbb{E}_{\mathbb{Q}}} \nc{\eps}{\varepsilon}
\nc{\bbEP}{\mathbb{E}_{\mathbb{P}}}\nc{\bbL}{\mathbb{L}}
\nc{\bbP}{\mathbb{P}} \nc{\bbQ}{\mathbb{Q}} \nc{\del}{\partial}
\nc{\Om}{\Omega} \nc{\om}{\omega} \nc{\bbR}{\mathbb{R}}
\nc{\bbC}{\mathbb{C}} \nc{\bfr}{\begin{flushright}}
\nc{\efr}{\end{flushright}} \nc{\dXt}{\Delta X_{t}} \nc{\dXs}{\Delta
X_{s}} \nc{\bs}{\blacksquare} \nc{\dX}{\Delta X} \nc{\dY}{\Delta Y}
\nc{\dnkx}{\left(X(T^{n}_{k})-X(T^{n}_{k-1})\right)}
\nc{\esssup}{\mathrm{ess}\mbox{ }\mathrm{sup}}
\nc{\essinf}{\mathrm{ess}\mbox{ } \mathrm{inf}}
\nc{\dhats}{\widehat{\delta_s}} \nc{\tX}{\tilde{X}}
\nc{\tZ}{\tilde{Z}}
\nc{\what}{\widehat}
 \nc{\half}{\frac{1}{2}}
 \nc{\wtilde}{\widetilde}
\def\rar{\rightarrow}
\nc{\uar}{\uparrow}
\nc{\dar}{\downarrow}
\nc{\chf}{\mbox{$\mathbf1$}} \nc{\eid}{\stackrel{d}{=}}
\DeclareFontFamily{U}{mathx}{\hyphenchar\font45}
\DeclareFontShape{U}{mathx}{m}{n}{
      <5> <6> <7> <8> <9> <10>
      <10.95> <12> <14.4> <17.28> <20.74> <24.88>
      mathx10
      }{}
\DeclareSymbolFont{mathx}{U}{mathx}{m}{n}
\DeclareMathSymbol{\bigtimes}{1}{mathx}{"91}
\begin{document}
\title{Markov bridges: SDE representation}
\author[]{Umut \c{C}etin}
\address{Department of Statistics, London School of Economics and Political Science, 10 Houghton st, London, WC2A 2AE, UK}
\email{u.cetin@lse.ac.uk}
\author[]{Albina Danilova}
\address{Department of Mathematics, London School of Economics and Political Science, 10 Houghton st, London, WC2A 2AE, UK}
\email{a.danilova@lse.ac.uk}

\begin{abstract} Let $X$ be a Markov process taking values in $\bfE$  with continuous paths and transition function $(P_{s,t})$. Given a measure $\mu$ on $(\bfE, \sE)$, a Markov bridge starting at $(s,\eps_x)$ and ending at $(T^*,\mu)$ for $T^* <\infty$ has the law of the original process starting at $x$ at time $s$ and  conditioned  to have law $\mu$ at time $T^*$. We will consider two types of conditioning: a) {\em weak conditioning} when  $\mu$  is absolutely continuous with respect to $P_{s,t}(x,\cdot)$  and b) {\em strong conditioning} when  $\mu=\eps_z$ for some $z \in \bfE$.   The main result of this paper is the representation of a Markov bridge as a solution to a stochastic differential equation (SDE)  driven by a Brownian motion in a diffusion setting. Under mild conditions on the transition density of the underlying diffusion process we establish the existence and uniqueness of  weak and strong solutions of this SDE. 
 \end{abstract}
\keywords{Markov bridge, $h$-transform, martingale problem, weak convergence}
\date{\today}
\maketitle
\section{Introduction}
The main purpose of this paper is to study path-wise construction of a Markov process on $[0,T^*)$ , where $T^* \in (0,\infty]$, starting at $x$ and arriving at $z$ at $T^*$.  A canonical example of such a process is the Brownian bridge on $[0,1)$:
\be \label{e:bb}
X_t=x+ B_t -t B_1 +(z-x)t, \qquad t \in [0,1).
\ee
If one defines $(\beta_t)_{t \in [0,1)}$ by 
\[
d\beta_t= dB_t -\frac{B_1-B_t}{1-t}dt,
\]
then $\beta$ becomes a Brownian motion in the natural filtration of $B$ initially enlarged with $B_1$. Moreover, $X$ solves the following SDE:
\be \label{smb:e:BB}
X_t= x+ \beta_t +\int_0^t \frac{z-X_s}{1-s}ds, \qquad t \in [0,1).
\ee
Conversely, if one starts with an arbitrary Brownian motion, $\beta$, the solution to the above SDE has the same law as the Brownian bridge defined by (\ref{e:bb}). In particular, $\lim_{t\rar 1} X_t=z$, a.s.. Both (\ref{e:bb}) and (\ref{smb:e:BB}) provide a path-wise construction of a Brownian bridge from a given Brownian motion. The crucial difference is that while the former construction is not adapted to the filtration of the given Brownian motion, the latter is.   

In this paper we will study analogous conditionings for a class of continuous Markov processes taking values in $\bbR^d$. It is known that this problem has a solution in the case of Brownian and Bessel bridges, which have been studied extensively in the literature and found numerous applications (see, e.g., \cite{PY81}, \cite{PY82},  \cite{BeP},  \cite{Hsu}, \cite{Sa}, and \cite{SW}). For a general right continuous strong Markov process \cite{FPY} constructs a measure on the canonical space such that the coordinate process have the prescribed conditioning under a duality hypothesis. More recently, \cite{C-UB-MB} performed the same construction without the duality hypothesis under the assumption that the semigroup $(P_t)$ of the given process has continuous transition densities and $\|P_t-I\| \rar 0$ as $t \rar 0$, where $\|\cdot\|$ corresponds to the operator norm.  Moreover, they have proven that if the original process is, in addition, self-similar, then a  path-wise construction of the Markov bridge can be performed. However, this construction is not adapted. 

In what follows  we will show that given an $\bbR^d$-valued diffusion, $Y$, and a Brownian motion, $B$, one can construct a Markov bridge which is adapted to the natural filtration of $B$ as a strong solution of the following SDE:
\be \label{e:i:sde}
X_t=x+\int_s^t \left\{ b(X_u)+ a(X_u)\frac{(\nabla h^z(u,X_u))^*}{h^z(u,X_u)}\right\}du+\int_s^t \sigma(X_u)\,dB_u,
\ee
where $h^z(t,x)=p(T^*-t,x,z)$, and $p$ is the transition density of $Y$. In the above representation $\sigma$ and $b$ are the diffusion and drift coefficients of $Y$ and $a=\sigma \sigma^*$. Although this SDE can be obtained via a formal application of an $h$-transform, the proof of existence and uniqueness of a strong solution is problematic due to the explosive behaviour of its coefficients. 

The SDE (\ref{e:i:sde}) resembles the Doob-Meyer decomposition of the process $Y$ defined on some probability space $(\Om, \cF, \bbP)$ with respect to its natural filtration enlarged with the value of $Y_{T^*}$. Indeed, it is well-known (see \cite{JY}, \cite{ThJ}, \cite{MY}) that in this enlarged filtration $Y$ satisfies
\[
Y_t=y+\int_0^t \left\{ b(Y_u)+ a(Y_u)\frac{(\nabla h^{Y_{T^*}}(u,Y_u))^*}{h^{Y_{T^*}}(u,Y_u)}\right\}du+\int_s^t \sigma(Y_u)\,d\beta_u,
\]
where $\beta$ is a Brownian motion under the enlarged filtration independent of $Y_{T^*}$ provided
\[
\int_0^t \left| b(Y_u)+ a(Y_u)\frac{(\nabla h^{Y_{T^*}}(u,Y_u))^*}{h^{Y_{T^*}}(u,Y_u)}\right|du <\infty, \, \bbP-\mbox{a.s. for each } t<T^*.
\]

If $\Om$ is a complete and separable metric space and $\cF$ is the collection of its Borel sets, e.g. if $(\Om, \cF, P)$ is the Wiener space, then there exists a family of regular conditional probabilities  $Q^y(z; \cdot)$ on $\cF$ given $\sigma(Y_{T^*})$ such that for $\mu_y$-a.e. $z$ one has $Q^y(z; E)=\bbP(E|Y_{T^*}=z)$  and $Q^y(z; [Y_{T^*}=z])=1$,  where $\mu_y$ is the law of $Y_{T^*}$ (see Theorem 5.3.19 in \cite{KS}).  Since $Y_{T^*}$ and $\beta$ are independent under $\bbP$, they will remain so under $Q^y(z; \cdot)$. This implies that for a given $x$ there exists a weak solution to (\ref{e:i:sde}) for $\mu_x$-a.e. $z$. One can extend this existence result to {\em all} $z$ if, e.g, both $Q^y(z; \cdot)$ and $\bbP(\cdot|Y_{T^*}=z)$ are continuous in $z$, which is difficult to verify in general. 

Note that the set, $E_x$, of $z$ for which there exists a weak solution to (\ref{e:i:sde}) depends on $x$.  Thus, for a given uncountable Borel set $S$, the set of  $z$ for which there exists a weak solution to (\ref{e:i:sde}) for {\em all} $x \in S$, i.e.  $\cap_{x \in S}E_x$, might be a null set. Indeed, suppose that $\mu_x \sim m$ for all $x \in E$, where $m$ is a measure on the Borel subsets of $\bbR^d$ without point mass. Then, $m(\cap_{x \in S}E_x)$ might be less than $m(\bbR^d)$ and, in particular, could be equal to $0$. 

The existence of $z$ such that there exists a solution to (\ref{e:i:sde}) for all $x$ is important if one wants to establish the strong Markov property of the solutions of (\ref{e:i:sde}) or, equivalently, the well-posedness of the associated local martingale problem. In view of the discussion above  an approach based on enlargement of filtrations cannot deliver the strong Markov property of the solutions.  This shortcoming is remedied by the techniques developed in this paper that  lead to  the existence and uniqueness of strong solutions of this SDE for {\em all} pairs $(x,z)$. In particular, we demonstrate the strong Markov property of the solutions of the SDE. 
 
The standard approach to establish a unique strong solution, which we follow in this paper,  is via the result due to Yamada-Watanabe which requires the existence of a weak solution and the path-wise uniqueness. As $h$ is not bounded from below, the standard results on pathwise uniqueness are not applicable but this issue can be circumvented by localisation arguments. The existence of a weak solution, however, is a much more delicate problem to handle.  One can relatively easily construct a weak solution on $[0,T]$ for any $T<T^*$, thus obtaining a sequence of consistent measures, $Q^T$. At this point one is tempted to use Kolmogorov's extension theorem to find a measure $P$ which solves the corresponding martingale problem on $[0,T]$ for any $T<T^*$. The issue with this argument is that the resulting measure is not necessarily concentrated on the paths with left limits. 

This issue can be resolved if one is willing to assume the existence of a right continuous and strong Markov dual process. The idea, as observed in \cite{FPY}, is to construct a dual process that starts at $z$ and is conditioned to arrive at $y$ at $T^*$. The law of this process is the image of the solution to (\ref{e:i:sde}) under time reversal, which gives the desired property of the measure obtained via Kolmogorov's extension, as well as   the bridge property, i.e. $P(\lim_{t \rar T^*}X_t=z)=1$.  We, on the other hand, use weak convergence techniques to establish the existence of such a measure.  This construction does not rely on neither duality assumptions nor self-similarity of $Y$. 

The strong solutions of (\ref{e:i:sde}) are also related to the question of time reversal of diffusion processes. In particular, if $\sigma \equiv 1$ it is known that the time reversed process, $(X_{T^*-t})$ satisfies the above SDE weakly on $[0,T^*)$ under some mild conditions. The SDE representation for the reversed process was obtained by F\"ollmer in \cite{Follmer} using entropy methods in both Markovian and non-Markovian case, and by Hausmann and Pardoux \cite{HP} via weak solutions of backward and forward Kolmogorov equations. Later Millet et al. \cite{MNS} extended the results of Hausmann and Pardoux by means of Malliavin calculus  to obtain the necessary and sufficient conditions for the reversibility of diffusion property. This problem was also tackled with the enlargement of filtration techniques by Pardoux \cite{PEF}. 

We will refer the type of conditioning represented by (\ref{e:i:sde}) as a {\em strong conditioning}. In this paper we will also consider weak conditioning when the original process is conditioned to have a given distribution at $T^*$, which is absolutely continuous with respect to its original distribution at time $T^*$. In contrast with strong conditioning this construction is based on a careful implementation of Kolmogorov's extension argument. The weak conditioning that we consider can be viewed as a generalisation of the result of Jamison \cite{Jamison} that studies the Markov processes related to a problem posed by Schroedinger. One can also give an interpretation of weak conditioning when $T^*=\infty$ in the context of penalisations on the canonical space (see \cite{RVY} and \cite{RoyYor} for a review of the topic).  

The rest of the paper is organised as follows. Section 2 contains the main results together with their discussion and examples.  The proofs are postponed until Section 3. 

\noindent {\bf Acknowledgement:} The authors would like to thank the anonymous referee whose remarks and comments have significantly improved the paper.  

\section{Main results and examples}

Let $T^*\in (0, \infty]$. If $T^*<\infty$, we suppose $\Om=C([0,T^*], \bfE)$ where $\bfE=\bigtimes_{i=1}^d [l_i, \infty)$ with the convention that if $l_i=-\infty$ then $[l_i,\infty)=\bbR$.  In case $T^{*}=\infty$, $\Om=C([0,\infty), \bfE)$. We equip $\bfE$ with the metric $\rho$ defined by $\rho(x,y)=\|x-y\|$, where $\|\cdot\|$ corresponds to the Euclidean norm on $\bbR^d$, and $\sE$ will stand for the set of all Borel subsets of $\bfE$. We denote by $\bfT$ the index set of time, i.e.  $\bfT=[0,T^*]$ (resp.  $\bfT=[0,\infty)$) when  $T^*<\infty$ (resp. $T^*=\infty$). Similarly, $\bfT_s= [s,\infty)\cap \bfT$.  We endow $\Om$ with the local uniform topology so that it is a Polish space and denote by $X$ the coordinate process. The canonical filtration $(\cB_t)_{t \in [0, T^*)}$ is defined via  $\cB_t=\sigma(X_s; s\leq t)$ for $t<T^*$, and $\cB_{T^*}=\vee_{t <T^*}\cB_t$.  

Let $A$ be the generator defined by
\be \label{e:At}
A_t=\frac{1}{2}\sum_{i,j=1}^d a_{ij}(t,\cdot)\frac{\partial^2}{\partial x_i \partial x_j}+\sum_{i=1}^d b_i(t,\cdot)\frac{\partial }{\partial x_i},
\ee
where $a$ is a matrix field and $b$ is a vector field. We suppose the following.
\begin{assumption} \label{smb:a:A}
\begin{enumerate}
\item For all $i,j=1, \ldots, d$, the maps $(t,x) \mapsto a_{ij}(t,x)$ and $(t,x) \mapsto b_i(t,x)$ are real-valued and Borel measurable, and $a_{ij}$ is locally bounded. Moreover, either $b_i$ is locally bounded from above for all $i$ or locally bounded from below for all $i$.
\item For each $(t,x)$ the matrix $a(t,x)$ is symmetric and non-negative, i.e. for every $\lambda \in \bbR^d$
\[
\sum_{i,j} a_{ij}(t,x)\lambda_i\lambda_j \geq 0.\]
\item \label{a:Awp} The local martingale problem for $A$ where solutions have sample paths in $\Om$ is well-posed, i.e. for any $(s,x) \in [0,T^*) \times \bfE$, there exists a unique probability measure $P^{s,x}$ on $(\Om, \cB_T^{*})$ such that $P^{s,x}(X_r=x, r \leq s)=1$ and $\left((M^f_t)_{t \in \bfT_s}, (\cB_t)_{t \in \bfT_s}\right)$ is a local martingale, where
\[
M^f_t=f(X_t)-f(X_s) -\int_s^t A_r f(X_r)\,dr,
\]
for any $f \in C^{\infty}(\bfE)$.
\end{enumerate}
\end{assumption}
Well-posedness of the local martingale problem described in Part \ref{a:Awp} of Assumption \ref{smb:a:A} implies that $X$ is strong Markov under $P^{s,x}$ (Theorem 4.4.2 in \cite{EK}) and we will denote its transition function with $(P_{r,t})$.
Moreover,  we show in  Appendix \ref{s:app} that  well-posedness of the local martingale problem is equivalent to the existence and uniqueness in law of a weak solution for an associated stochastic differential equation. 

This relationship between the local martingale problem and the weak solutions of SDEs provides a generic approach to the well-posedness of the local martingale via a study of the associated SDE. In particular, the local martingale problem is well-posed if (see Remark 5.4.30 and Corollary 5.4.29 in \cite{KS}) $\bfE=\bbR^d$,  the coefficients $a_{ij}$ and $b_i$ are H\"{o}lder continuous and bounded,  and the matrix $a$ is uniformly positive definite, i.e.
\[
\sum_{i,j} a_{ij}(t,x)\lambda_i\lambda_j \geq c \|\lambda\|^2, \qquad\forall x, \lambda \in \bbR^d \mbox{ for some } c>0.\] 

Given a measure $\mu$ on $(\bfE, \sE)$, a Markov bridge starting at $(s,\eps_x)$ and ending at $(T^*,\mu)$ for $T^* <\infty$ has the law of the original process, $X$, given that $X_{T^*}$ has the law $\mu$. We will consider two types of conditioning: a) {\em weak conditioning} when  $\mu$  is absolutely continuous with respect to $P_{s,T^*}(x,\cdot)$  and b) {\em strong conditioning} when  $\mu=\eps_z$ for some $z \in \bfE$.  

The weak conditioning can be obtained via an {\em $h$-function}. Indeed, since  $\mu$ is   absolutely continuous with respect to   $P_{s,T^*}(x,\cdot)$, there exists a Radon-Nikodym derivative, $H$, so that
\[
\mu(E)=\int_E H(y) P_{s,T^*}(x,dy), \forall E\in \sE.
\]
If we define the function $h:[0,T^*]\times \bfE \mapsto \bbR_+$ by
\[
h(t,y):=\int H(z) P_{t,T^*}(y,dz),
\]
then $\left((h(t,X_t)_{t \in \bfT_s},  (\cB_t)_{t \in \bfT_s}\right)$ is a martingale under  $P^{s,x}$.  The weak conditioning can be obtained via Theorem \ref{c:hsde} under some technical conditions listed in the definition below. 
\begin{definition} For $T^*<\infty$ (resp. $T^*=\infty$), we call a function $h:[0, T^*]\times \bfE \mapsto [0,\infty)$ (resp.  $h:[0, T^*) \times \bfE \mapsto (0,\infty)$), strictly positive on $[0,T^*)\times \bfE$, an {\em $h$-function} if $h \in C^{1,2}([0,T^*)\times\bfE)$ and $\left((h(t,X_t)_{t \in \bfT},  (\cB_t)_{t \in \bfT}\right)$ is a martingale under every $P^{0,x}$. 
\end{definition}

\begin{theorem}  \label{c:hsde} Let $\sigma$ be a matrix field such that $a=\sigma\sigma^*$. Suppose that the conditions of Assumption \ref{smb:a:A} hold and let $h$ be an $h$-function. Then, there exits a unique weak solution on $\bfT_s$ to the following SDE:
\[
X_t=x+\int_s^t \left\{ b(u,X_u)+ a(u,X_u)\frac{(\nabla h(u,X_u))^*}{h(u,X_u)}\right\}du+\int_s^t \sigma(u,X_u)\,dB_u.
\]
$X$ is a strong Markov process  and the associated transition function, $(P^h_{r,t})$ is related to $(P_{r,t})$ via
\[
P^h_{r,t}(x,E)=\frac{1}{h(r,x)} \int_E h(t,y)P_{r,t}(x,dy), \qquad x \in \bfE, \, E\in \sE, s\leq r<t, t  \in \bfT_s.
\]
In particular, if $T^* <\infty$, 
\be \label{smb:e:limitdist} P(X_{T^*}\in E)=\int_E \frac{h(T^*,y)}{h(s,x)}P_{s,T^*}(x,dy).
\ee
\end{theorem}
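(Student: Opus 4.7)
The plan is to construct the weak solution of the SDE as the law of a Doob $h$-transform of the process $X$ under $P^{s,x}$, and then invoke the equivalence between well-posed local martingale problems and weak solutions of SDEs (Appendix~\ref{s:app}) to pass from the transformed martingale problem back to the SDE in the statement.

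For fixed $(s,x)\in [0,T^*)\times\bfE$, I would define a probability measure $Q^{s,x}$ on $(\Om,\cB_{T^*})$ through the consistent prescription
\[
\frac{dQ^{s,x}}{dP^{s,x}}\bigg|_{\cB_t}=\frac{h(t,X_t)}{h(s,x)},\qquad s\leq t<T^*.
\]
Consistency in $t$ holds because $(h(t,X_t))_{t\in\bfT_s}$ is a strictly positive $P^{s,x}$-martingale; since $\Om$ is Polish and $\cB_{T^*}=\vee_{t<T^*}\cB_t$, a standard Kolmogorov-extension argument extends this to a unique measure on $\cB_{T^*}$.

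Next I would identify the martingale problem satisfied by $X$ under $Q^{s,x}$. Applying It\^o's formula to $f(t,X_t)h(t,X_t)$ for $f\in C^{1,2}([0,T^*)\times \bfE)$ of compact spatial support, and using that the martingale property of $h(t,X_t)$ together with well-posedness for $A$ forces $(\partial_u+A_u)h\equiv 0$ on $[0,T^*)\times\bfE$, one obtains
\[
(\partial_u+A_u)(fh)=h\,(\partial_u+A_u)f+a\,\nabla f\cdot(\nabla h)^{*}.
\]
A standard change-of-measure calculation then shows that under $Q^{s,x}$ the process
\[
f(t,X_t)-f(s,x)-\int_s^t\left[\partial_u f+A_u f+\frac{a\,\nabla f\cdot(\nabla h)^{*}}{h}\right](u,X_u)\,du
\]
is a local martingale, i.e.\ $Q^{s,x}$ solves the local martingale problem for the $h$-transformed generator $A^h_u f:=A_u f+\frac{a(u,\cdot)(\nabla h(u,\cdot))^{*}}{h(u,\cdot)}\cdot\nabla f$, whose second-order part is still $\tfrac12 a$. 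Uniqueness for this transformed problem is obtained by reversing the change of measure: any candidate $Q'$ makes $1/h(t,X_t)$ a strictly positive $Q'$-local martingale, and re-weighting $Q'$ by it yields a solution of the martingale problem for $A$, which is unique by Assumption~\ref{smb:a:A}. Appendix~\ref{s:app} then translates this well-posedness into existence and uniqueness in law of a weak solution of the SDE, and the strong Markov property follows from well-posedness via Theorem 4.4.2 of \cite{EK}.

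The transition function formula is read off directly from the Radon--Nikodym prescription:
\[
P^h_{r,t}(x,E)=Q^{r,x}(X_t\in E)=\frac{1}{h(r,x)}\int_E h(t,y)\,P_{r,t}(x,dy),
\]
and (\ref{smb:e:limitdist}) is obtained by letting $t\uparrow T^*$, using path continuity and uniform integrability of the closed $P^{s,x}$-martingale $(h(t,X_t))_{t\leq T^*}$. The main technical obstacle I anticipate is the control of the coefficient $a\nabla h/h$ as $t\uparrow T^*$, where $h$ may degenerate: this is what necessitates stating the SDE only on $\bfT_s$ (excluding the terminal time) and handling the martingale problem via localisation, while the passage to the limit in (\ref{smb:e:limitdist}) is the step where the hypothesis that $h$ is a genuine $h$-function, rather than merely space-time harmonic on $[0,T^*)\times\bfE$, is really used.
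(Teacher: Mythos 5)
Your proposal follows essentially the same route as the paper: a Doob $h$-transform to build the candidate law, an It\^o/change-of-measure computation (Lemma~\ref{smb:l:Mft}) to identify the transformed generator $A^h$, uniqueness by reversing the measure change, and the SDE--martingale-problem equivalence of Appendix~\ref{s:app} to finish. However, the extension step hides the genuine technical content when $T^*<\infty$, and as phrased it is not correct.

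The issue is that a consistent family $\{Q^T\}_{T<T^*}$ prescribed on $(\cB_T)_{T<T^*}$ extends, via Theorem~\ref{smb:th:Pextend}, to a measure on $(C([0,T^*),\bfE),\cB^-_{T^*})$ --- a strictly larger path space than $\Om=C([0,T^*],\bfE)$. Nothing in Kolmogorov extension alone guarantees that the extended measure charges only paths admitting a limit as $t\uparrow T^*$; that is precisely what must be proved before one can speak of $X_{T^*}$ or of a measure on $(\Om,\cB_{T^*})$. The paper establishes this by a Dynkin $\pi$-$\lambda$ argument showing $Q(E)=\hat E^{s,x}[L_{T^*}\chf_E]$ for all $E\in\cB^-_{T^*}$ with $L_{T^*}=h(T^*,X_{T^*-})/h(s,x)$, which relies on the $h$-martingale being closed (uniformly integrable) on $[0,T^*]$. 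From this $Q\ll\hat P^{s,x}$ on $\cB^-_{T^*}$, and the $\hat P^{s,x}$-a.s.\ existence of $\lim_{t\uparrow T^*}X_t$ transfers to $Q$. Your invocation of uniform integrability is the right idea, but it appears only at the very end in support of (\ref{smb:e:limitdist}); it needs to be moved upstream, because it is what legitimizes the extended measure living on $\Om$ in the first place. Equivalently, and more cleanly, one can avoid the projective-limit detour entirely by defining $\frac{dQ^{s,x}}{dP^{s,x}}=\frac{h(T^*,X_{T^*})}{h(s,x)}$ on $\cB_{T^*}$ directly, using that $(h(t,X_t))_{t\in\bfT}$ is by definition a martingale indexed up to $T^*$; its restriction to $\cB_t$ then recovers your prescription automatically.

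Two further, smaller remarks. First, your uniqueness sketch (re-weight any competitor $Q'$ by $1/h(t,X_t)$) needs the same localisation the paper uses: $1/h$ is only a local martingale a priori, and $h$ may vanish at $T^*$, so one must argue on each $[s,T]$ with $T<T^*$ (where $h(T,\cdot)>0$ makes the Girsanov transform bona fide) and then invoke Theorem~\ref{smb:th:Pextend} for uniqueness on $\cB_{T^*}$; this is the structure of the paper's proof of Corollary~\ref{smb:c:hpath}. Second, you write that the SDE is stated on $\bfT_s$ ``excluding the terminal time'', but $\bfT_s=[s,T^*]$ includes $T^*$ when $T^*<\infty$; the degeneration of $\nabla h/h$ near $T^*$ is dealt with not by truncating the index set but by the localisation on $[s,T]$ and the stopping times $\tau_n$ inside the martingale-problem verification, exactly as in the proof of Theorem~\ref{sbm:th:hpath}.
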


Observe that the definition of an $h$-function as well as Theorem \ref{c:hsde} does not require $T^*<\infty$.  
A way to interpret the weak conditioning with $T^*=\infty$ is via penalisations (see, e.g., \cite{RVY} and \cite{RoyYor} for the theory and examples). Indeed, in this case the law of the bridge process in Theorem \ref{c:hsde}  can be viewed as the penalised probability measure on $(\Om, \cB_{\infty})$ induced by the weight process $(h(t,X_t))_{t \in \bfT}$. Thus, weak conditioning with $T^*=\infty$ is an example of penalisation when the weight process is an adapted martingale. 

\begin{example} \label{smb:x:h1}Let  $T^*<\infty$ and suppose that  $E \in \sE$ is a set such that $(t,x)\mapsto P_{t,T^*}(x,E)$ belongs to $C^{1,2}([0,T^*)\times \bfE)$ and $P_{t,T^*}(x,E)>0$ for all $t<T^*$ and $x \in \bfE$. Define $h$ by $h(T^*,x)=\chf_E$ and $h(t,x)=P_{t,T^*}(x,E)$. Clearly, $\left((h(t,X_t)_{t \in \bfT},  (\cB_t)_{t \in \bfT}\right)$ is a bounded martingale under every $P^{0,x}$. Moreover,  $h \in C^{1,2}([0,T^*)\times\bfE)$ by assumption. Thus, $h$ is an $h$-function. If we apply the above theorem to this $h$-function, we end up with a weak conditioning of the coordinate process that ensures that $P^{h;s,x}(X_{T^*} \in A)=1$. 
\end{example}

 Strong conditioning, intuitively, can be done via ``$h(t,x)=P^{t,x}(X_{T^*}=z)$''. As $[X_{T^*}=z]$ is most likely a null set, the above theorem is not applicable since the $h$-function vanishes. We will obtain the stochastic differential equation for the bridge process under two different sets of conditions. The first set of conditions will be handy when one can obtain bounds on the transition density of the process associated with the  solution of the local martingale problem over the interval $[0,T^*]$, e.g. via Gaussian type estimates on the fundamental solution of the parabolic pde $u_t = Au$. Although this assumption is stated for a generator with time-independent coefficients, its generalisation to the time-dependent case is straightforward and the proof will hold verbatim with the obvious modifications. We demonstrate the proof for the time-independent case for the sake of brevity of exposition. 

The second set of assumptions can be seen as  a relaxation of the first one in the case of a time-homogeneous local martingale problem whose solution has sample paths in $C((0,\infty), \bfE)$.  The proof in the latter case relies on a certain bounded property of the potential density of $X$, which is generally satisfied in the one-dimensional case (see Proposition \ref{smb:l:ubdd}). 
\begin{assumption} \label{smb:a:pbdd} Suppose that $a$ and $b$ do not depend on time, $\bfT=[0,T^*]$, Assumption \ref{smb:a:A} is satisfied, and  the family of solutions of the local martingale problem, $P^x$, is weakly continuous. Moreover, $({P}_t)$ is a  semi-group admitting a regular transition density $p(t,x,y)$ with respect to a $\sigma$-finite  measure $m$ on $(\bfE, \sE)$ such that 
\be \label{smb:e:dual}
\lim_{t \rar 0} \int_{B_r^c(z)} p(t,y,z)p(u-t,x,y)m(dy)=0, \; \forall u >0,\, r>0,
\ee
where $B_r(z):=\{y:\|y-z\|<r\}$, the Chapman-Kolmogorov equations,
\be \label{smb:e:C-K}
p(t, x,y)=\int_{\bfE} p(t-s,x,u)p(s,u,y)m(du), \; 0<s<t\leq T^*, 
\ee
hold, and for every $z \in \bfE$ and $r>0$ 
\be \label{smb:e:tdbound}
\sup_{\substack{x \notin B_r(z)\\ t \leq T^*}}p(t, x,z) < \infty.
\ee
 \end{assumption}
 
The condition (\ref{smb:e:dual}) is satisfied when $p(s,x,y)$ is continuous on $(0,T^*)\times \bfE\times \bfE$ and there exists a right-continuous process, $\tilde{X}$ such that $X$ and $\tilde{X}$ are in duality with respect to $m$. This would be the case if $X$ were a strongly symmetric Borel right process (see Remark 3.3.5 in \cite{MarRos}) with continuous transition densities, in particular a one-dimensional regular diffusion without an absorbing boundary. Moreover, if $X$ is a Feller process the laws $(P^x)$ will be weakly continuous, too.

The boundedness assumption on the transition density as given in (\ref{smb:e:tdbound}) is satisfied in many practical applications. In particular, if the coefficients $b_i$ and $a_{ij}$ are bounded, H\"older continuous, and the matrix $a$ is uniformly positive definite, then $m$ becomes the Lebesgue measure and  $p(t,x,y)$ becomes the fundamental solution of the parabolic PDE, $u_t=Au$, and satisfies for some $k>0$
\be \label{smb:e:fud}
p(t,x,y)\leq t^{-\frac{d}{2}}\exp\left(-k\frac{\|x-y\|^2}{t}\right), \qquad t\leq T^*,
\ee
yielding the desired boundedness (see Theorem 11 in Chap. 1 of \cite{friedman}). Also observe that this estimate implies (\ref{smb:e:dual}), too. Moreover, $P_tf$ is a continuous function vanishing at infinity whenever $f$ is continuous and vanishes at infinity, i.e., $X$ is Feller. 
 \begin{assumption} \label{smb:a:potden} Suppose that $a$ and $b$ do not depend on time, $\bfT=[0,\infty)$, Assumption \ref{smb:a:A} is satisfied, and  the family of solutions of the local martingale problem, $P^x$, is weakly continuous. Moreover, $({P}_t)$ is a  semi-group admitting a regular transition density $p(t,x,y)$ with respect to a $\sigma$-finite measure $m$ on $(\bfE, \sE)$ such that 
\be \label{smb:e:dualinf}
\lim_{t \rar 0} \int_{B_r^c(z)} p(t,y,z)p(u-t,x,y)m(dy)=0, \; \forall u >0,\, r>0,
\ee
and the Chapman-Kolmogorov equations,
\be \label{smb:e:C-Kinf}
p(t, x,y)=\int_{\bfE} p(t-s,x,u)p(s,u,y)m(du), \; 0<s<t,
\ee
hold. 

Furthermore,  the $\alpha$-potential density\footnote{ The $\alpha$-potential density defines the kernel of the $\alpha$-potential operator. That is, for any $f \in C_b(\bfE)$
 \[
 U^{\alpha}f(x):=\int_0^{\infty} e^{-\alpha t}P_tf(x)dt=\int_{\bfE}u^{\alpha}(x,y) f(y)m(dy).
 \]}, $u^{\alpha}$, defined by
\[
u^{\alpha}(x,y) := \int_0^{\infty} e^{-\alpha t}p(t,x,y)dt, \qquad (x,y) \in \bfE\times \bfE,
\]
satisfies 
\be \label{smb:e:bddpotential}
\sup_{\alpha >0, x \in K} \alpha u^{\alpha}(x,y) <\infty
\ee
for any $y \in \bfE$ and a compact set $K \subset \bfE$ such that $y \notin K$.
 \end{assumption}
 
\begin{theorem} \label{smb:th:MB} Let $\sigma$ be a matrix field such that $\sigma \sigma^*=a$.  Suppose that Assumption \ref{smb:a:pbdd} or \ref{smb:a:potden} is in force, $T^*<\infty$. Fix $x \in \bfE$ and $z \in \bfE$ such that the following conditions hold:
\begin{enumerate}
\item $m(\{z\})=0$ and $p(T^*,x,z)>0$.  
\item For $h(t,y)=p(T^*-t,y,z)$ either $h \in C^{1,2}([0,T^*), \bfE)$ or $h \in C^{1,2}([0,T^*), \mbox{int}(\bfE))$, $x \in \mbox{int}(\bfE)$, and $ P_t(x,\mbox{int}(\bfE))=1 $ for all $t\leq T^*$.
\item If Assumption \ref{smb:a:potden} is enforced, then
\begin{itemize}
\item[i)] $u^{\alpha}(x,z)<\infty$ for $\alpha>0$,
\item[ii)] either the map $t\mapsto p(t,x,y)$ is continuous on $(0, \infty)$ for every $y \in \bfE$, or for all $t>0$ $p(t,x,y)>0, \,m$-a.e. $y$. 
\end{itemize}
\end{enumerate}
Then  there exists a  weak solution  on $[0,T^*]$  to 
\be \label{smb:e:sdeB}
X_t=x+\int_0^t \left\{ b(X_u)+ a(X_u)\frac{(\nabla h(u,X_u))^*}{h(u,X_u)}\right\}du+\int_0^t \sigma(X_u)\,dB_u,
\ee
the law of which, $P^{x \rar z}_{0 \rar T^*}$, satisfies  $P^{x \rar z}_{0 \rar T^*} (\inf_{u \in [0,T]}h(u,X_u) =0)=0$ for any $T<T^*$, and $P^{x \rar z}_{0 \rar T^*}(X_{T^*}=z)=1$. 

In addition, if $h(t, \cdot)>0$ for all $t<T^*$, weak uniqueness holds for the above SDE. Moreover, $X$ is a  Markov process with transition function $(P^h_{s,t})$ defined by
\[
P^h_{s,t}(x,E)=\int_{E} \frac{p(t-s,x,y)p(T^*-t,y,z)}{p(T^*-s,x,z)}m(dy), s<t<T^*, x\in \bfE, E\in \sE.
\]
\end{theorem}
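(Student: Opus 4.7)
The plan is to build $P^{x\rar z}_{0\rar T^*}$ in two stages: first, for each $T<T^*$, construct a probability $P^T$ on $\cB_T$ via an $h$-transform of the unconditioned dynamics; second, extend to $\cB_{T^*}$ and verify the bridge property using tightness and the transition-density bounds.

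For the first stage, fix $T<T^*$ and note that by the Chapman--Kolmogorov identity (\ref{smb:e:C-K}), $h(t,y)=p(T^*-t,y,z)$ makes $(h(t,X_t))_{t\in[0,T]}$ a non-negative $P^{0,x}$-martingale with initial value $p(T^*,x,z)>0$. I would define
\[
\left.\frac{dP^T}{dP^{0,x}}\right|_{\cB_t}=\frac{h(t,X_t)}{h(0,x)},\qquad t\in[0,T],
\]
so that $P^T(\inf_{u\in[0,T]}h(u,X_u)>0)=1$ automatically. A measure-change computation within the SDE framework (justified by the martingale-problem/SDE equivalence established in Appendix \ref{s:app}) then shows that the coordinate process solves (\ref{smb:e:sdeB}) on $[0,T]$ under $P^T$, and a direct calculation yields the announced transition function $(P^h_{s,t})$ for $s<t\leq T$. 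The family $(P^T)_{T<T^*}$ is consistent by the tower property of conditional expectations applied to the martingale $h(\cdot,X_\cdot)$, hence determines a probability $P$ on $\cB_{T^*-}:=\vee_{T<T^*}\cB_T$ with $P|_{\cB_T}=P^T$.

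To close the interval at $T^*$, the key estimate is that for every $r>0$ and $t<T^*$
\[
P(\|X_t-z\|>r)=\int_{\{y:\,\|y-z\|>r\}}\frac{p(t,x,y)\,p(T^*-t,y,z)}{p(T^*,x,z)}\,m(dy),
\]
which tends to $0$ as $t\uar T^*$ by assumption (\ref{smb:e:dual}) (resp.\ (\ref{smb:e:dualinf})) after the substitution $t\mapsto T^*-t$. Combined with a tightness argument on $C([0,T^*],\bfE)$ --- obtained from (\ref{smb:e:tdbound}) under Assumption \ref{smb:a:pbdd} or from (\ref{smb:e:bddpotential}) under Assumption \ref{smb:a:potden}, together with the weak continuity of $(P^x)$ and standard modulus-of-continuity estimates for the underlying martingale problem on compacta of $h$-values --- this upgrades convergence in probability to $X_t\rar z$ almost surely and yields the extension $P^{x\rar z}_{0\rar T^*}$ with $P^{x\rar z}_{0\rar T^*}(X_{T^*}=z)=1$.

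The Markov property together with the transition formula follow directly from the $h$-transform identity
\[
E^P[f(X_t)\mid\cB_s]=\frac{1}{h(s,X_s)}E^{s,X_s}[f(X_t)h(t,X_t)].
\]
When $h(t,\cdot)>0$ throughout $[0,T^*)\times\bfE$, any weak solution of (\ref{smb:e:sdeB}) satisfying $P(\inf_{u\leq T}h(u,X_u)=0)=0$ is, by the inverse transformation with density $h(0,x)/h(t,X_t)$, the pushforward of a solution to the original martingale problem, whose uniqueness in law is granted by Assumption \ref{smb:a:A}; hence weak uniqueness. The main technical obstacle will be the tightness argument near $T^*$: because the drift in (\ref{smb:e:sdeB}) blows up as $t\uar T^*$, classical moduli-of-continuity bounds are unavailable, and Assumptions \ref{smb:a:pbdd} and \ref{smb:a:potden} must be handled separately --- the former via uniform Gaussian-type control of $p(T^*-t,y,z)$ away from $z$, the latter via a more delicate passage through the bounded normalized resolvent $\alpha u^\alpha(x,z)$.
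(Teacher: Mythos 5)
Your overall roadmap is the right one, and much of it (the $h$-transform construction of $P^T$ on $\cB_T$, the automatic positivity $P^T(\inf_{u\le T}h(u,X_u)>0)=1$, the observation that $P(\|X_t-z\|>r)\rar 0$ as $t\uar T^*$ via (\ref{smb:e:dual})/(\ref{smb:e:dualinf}), the Markov property and the shape of the transition kernel, and the inverse Girsanov argument for weak uniqueness) coincides in substance with the paper's. However, the step that carries almost all of the weight is exactly the one you acknowledge but do not carry out: establishing that the extended measure is concentrated on paths that actually converge at $T^*$. As the introduction of the paper itself warns, the Kolmogorov extension of the consistent family $(P^T)$ lives on $\cB^-_{T^*}$ and need not charge paths with a left limit at $T^*$; convergence in probability of $X_t$ to $z$ does not by itself produce an almost-sure limit. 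So the phrase \emph{``standard modulus-of-continuity estimates for the underlying martingale problem on compacta of $h$-values''} cannot be the mechanism: standard estimates fail precisely because the drift $a\,\nabla h/h$ blows up as $t\uar T^*$, and you say so yourself one sentence later.

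What the paper actually does, and what is missing from your sketch, is a bespoke tightness argument for the family $(Q^T)$ on $C([0,T^*],\bfE)$ (not for a single extension $P$). One splits the oscillation over $[T^*-\delta,T^*]$ via (\ref{smb:e:setd}) into $[T,T^*]$ and $[T^*-\delta,T]$, treats the first piece by a Blumenthal zero--one law argument combined with weak continuity of $(P^x)$ to get the uniform-on-compacts limit (\ref{smb:e:est1.1.0}), and treats the second piece via optional stopping of the martingale $M_t=p(T^*-t,X_t,z)$ at the exit time of a small ball around $X_{T^*-\delta}$. Under Assumption~\ref{smb:a:pbdd} this reduces to bounded convergence using (\ref{smb:e:tdbound}); under Assumption~\ref{smb:a:potden} it requires the supermartingale/Laplace-transform machinery of Lemma~\ref{smb:l:LT} and the resolvent bound (\ref{smb:e:bddpotential}) applied to the function $\varphi(t,\delta,x)$, followed by a Chapman--Kolmogorov argument to pass from $\varphi(\cdot,0,\cdot)=0$ a.e.~to $\varphi(T^*,0,x)=0$. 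None of this is ``standard'', and it is the part of the proof that replaces the duality hypothesis of earlier approaches. Until that estimate is proved, the extension to $[0,T^*]$ and the almost-sure bridge condition $X_{T^*}=z$ are not established; the rest of your argument then follows along the paper's lines.
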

\begin{remark} Condition $m(\{z\})=0$ is in fact not necessary. Indeed, if $m(\{z\})>0$, then $P^x(X_{T^*}=z)>0$ due to $p(T^*,x,z)>0$. This implies that we are in the setting of Example \ref{smb:x:h1} and, therefore, Theorem \ref{c:hsde} is applicable. If one, however, still wants to use the weak convergence techniques employed in the proof of the above theorem, one can do so without the convergence result of Lemma \ref{smb:l:potential} since $M$ of the lemma is bounded by $1/m(\{z\})$ and (\ref{smb:e:est2.1}) follows from (\ref{smb:e:est2.1T}) by the Dominated Convergence Theorem. Also note that whenever $m(\{z\})>0$, we do not need (\ref{smb:e:dual}) or (\ref{smb:e:dualinf}) to complete the proof either. Moreover, both (\ref{smb:e:tdbound}) and (\ref{smb:e:bddpotential}) are automatically satisfied.
\end{remark}

One can in fact obtain a unique strong solution to (\ref{smb:e:sdeB}) under slightly stronger conditions on the coefficients and the transition density (see Theorem \ref{smb:th:sunique}). Moreover, if the conditions of Theorem \ref{smb:th:MB} are satisfied by all $x \in \bfE$ and $h(t, \cdot)>0$ for all $t<T^*$, then $X$ is strong Markov (see Corollary \ref{smb:c:smarkov}). 

We end the discussion of main results with the following examples of strong conditioning of one-dimensional diffusions and Gaussian processes.
\begin{example} \label{ex:1dd} Consider the case  $ \bfE= [l, \infty)$ and $a$ and $b$ do not depend on time.  Moreover, suppose that $a$ and $b$ are continuous on $(l, \infty)$, $a$ is strictly positive on $(l,\infty)$, and
\[
\forall x \in (l, \infty), \exists \eps>0 \mbox{ s.t. } \int_{x-\eps}^{x+\eps}\frac{|b(y)|}{a(y)}dy <\infty.
\]
Let $c \in (l,\infty)$ be an arbitrary point and define the {\em scale function}
\[
s(x):=\int_c^x\exp\left(-2\int_c^y \frac{b(z)}{a(z)}dz\right)dy.
\]
Note that under the above  assumptions $s$ is twice continuously differentiable on $(l, \infty)$ and the first derivative is strictly positive. 

The associated {\em speed measure}, $m$, on $(l, \infty)$ is characterised by
\[
m(dx)= \frac{2}{a(x)s'(x)}dx.
\]
We further assume that the endpoints of $\bfE$ are inaccessible, that is, 
\be \label{smb:e:iab}
\int_c^{\infty}m((c,x))s'(x)dx =\int_l^c m((x,c))s'(x)dx =\infty.
\ee
This yields in view of Theorem 5.5.15 in \cite{KS} that there exists a unique weak solution to
\[
dX_t=\sigma(X_t) dB_t +b(X_t)dt,
\]
and consequently the local martingale problem for $A$ is well-posed by Theorem \ref{g:t:lm2ws}.

We will in fact require more and assume that the infinite boundaries are natural. This  ensures that $(P^x)$ is Feller (see Theorem 8.1.1 in \cite{EK}), and, therefore weakly continuous.

Since the end-points are inaccessible McKean \cite{McKean56} has shown that the transition semi-group admits a density, $p(t,x,y)$, with respect to $m$ with the following properties:
\begin{enumerate}
\item For each $t>0$ and $(x,y) \in (l, \infty)^2$, $p(t,x,y)=p(t,y,x)>0$.
\item For each $t>0$ and $y \in (l, \infty)$, the maps $x \mapsto p(t,x,y)$ and $x \mapsto A p(t,x,y)$ are continuous and bounded on $(l, \infty)$.
\item $\frac{\partial}{\partial t}p(t,x,y)=A p(t,x,y)$ for each $t>0$ and $(x,y) \in (l, \infty)^2$.
\end{enumerate}
The boundedness of $A p(t,x,y)$ for fixed $t$ and $y \in (l,\infty)$ together with the fact that $m$ has no atom implies via Theorem VII.3.12 in \cite{RY} that for each $y\in (l,\infty)$ the $s$-derivative $\frac{d}{ds}p(t,x,y)$ exists. Since $s$ is differentiable, we have $\frac{d}{dx}p(t,x,y)$ exists. 

Note that if $b\equiv 0$, $s(x)=x$, and the continuity of $\sigma$ and $Ap$ imply, once again by Theorem VII.3.12 in \cite{RY}, that $p(t,x,y)$ is twice continuously differentiable with respect to $x$.

When $b$ is not identically $0$, consider the transformation  $Y_t=s(X_t)$, which yields a one-dimensional  diffusion on $s(\bfE)$ with no drift and inaccessible boundaries. Note that natural boundaries remain so after this transformation. Then, $Y$ possesses a transition density $q$ with respect to its speed measure $\tilde{m}$. Moreover, it can be directly verified that $q(t,x,y)=p(t,s^{-1}(x), s^{-1}(y))$. By the previous discussion $q$ is twice continuously differentiable with respect to $x$. Since $p(t,x,y)=q(t,s(x), s(y)$ and $s$ is twice continuously differentiable, we deduce that $p$ is twice continuously differentiable, as well. This shows that $p(\cdot,\cdot, y) \in C^{1,2}((0,\infty)\times (l,\infty))$ for $y \in (l,\infty)$.

If $l$ is finite but an entrance boundary, then $p(\cdot,\cdot, l) \in C^{1,2}((0,\infty)\times (l,\infty))$ as well. Indeed, Chapman-Kolmogorov identity implies for $s<t$
\[
p(t,y,l)=\int_l^{\infty} p(t-s,y,z)p(s,z,l)m(dz).
\]
Since $p(s, \cdot,\cdot)$ is symmetric $\int_l^{\infty} p(s,z,l)m(dz)=1$. Thus, the assertion follows from differentiating under the integral sign and the analogous properties for $p(t-s,y,z)$. 

Thus, for  $x \in (l,\infty)$ and $z \in (l, \infty)$  (resp. $z \in [l, \infty)$) if $l$ is natural (resp. entrance) boundary, letting $h(t,y)=p(T^*-t,y,z)$, we see that in view of Proposition \ref{smb:l:ubdd} the conditions of Theorem \ref{smb:th:MB} are satisfied, and the Markov bridge from $x$ to $z$ is the unique weak solution of
\be \label{smb:e:1d}
X_t =x +\int_0^t \left\{b(X_u) + a(X_u) \frac{p_x(T^*-u, X_u,z)}{p(T^*-u,X_u,z)}\right\}du+\int_0^t \sigma(X_u)dB_u.
\ee
If $b$ and $\sigma$ are in addition locally Lipschitz, Theorem \ref{smb:th:sunique} ensures the existence and uniqueness of a strong solution of the above SDE.
\end{example}
\begin{remark} In the case of one-dimensional time-homogeneous diffusions on $\bbR$ (\ref{smb:e:iab}) is satisfied under the standard assumption on the drift coefficient having at most a linear growth. To see this, suppose $l=-\infty$, $a$ is strictly positive  and continuous on $\bbR$, and  $b$ satisfies
\[
|b(x)|<K(1+|x|), 
\]
for some $K<\infty$. 

Indeed, first observe that for any $x<y$,  $\int_x^y \frac{|b(z)|}{a(z)}dz <\infty$. Thus, $s$ is well-defined. Moreover, for $x>1$, 
\bean
m(0,x)s'(x)&=& \frac{\int_0^x\frac{2}{a(y)}\exp\left(2\int_0^y \frac{b(z)}{a(z)}dz\right)dy}{\exp\left(2\int_0^x \frac{b(z)}{a(z)}dz\right)}\\
&=&\frac{\int_0^x\frac{2}{a(y)}\exp\left(2\int_0^y \frac{b(z)}{a(z)}dz\right)dy}{1+\int_0^x\frac{2b(y)}{a(y)}\exp\left(2\int_0^y \frac{b(z)}{a(z)}dz\right)dy}\\
&\geq& \frac{\int_0^x\frac{2}{a(y)}\exp\left(2\int_0^y \frac{b(z)}{a(z)}dz\right)dy}{1+\int_0^x\frac{2|b(y)|}{a(y)}\exp\left(2\int_0^y \frac{b(z)}{a(z)}dz\right)dy}\\
&\geq& \frac{\int_0^x\frac{2}{a(y)}\exp\left(2\int_0^y \frac{b(z)}{a(z)}dz\right)dy}{1+K(1+x)\int_0^x\frac{2}{a(y)}\exp\left(2\int_0^y \frac{b(z)}{a(z)}dz\right)dy}\\
&\geq& \frac{1}{K_0 + Kx},
\eean
where 
\[
K_0=K+\frac{1}{\int_0^1\frac{2}{a(y)}\exp\left(2\int_0^y \frac{b(z)}{a(z)}dz\right)dy}.
\]
Since $\int_1^{\infty} \frac{1}{K_0 + Kx}dx=\infty$, we deduce that $\infty$ is an inaccessible boundary. The case of $-\infty$ is handled similarly. 
\end{remark}

\begin{example} Consider the following multi-dimensional linear SDE:
\[
dX_t= \sigma(t)dB_t + \left\{b(t) + \gamma(t) X_t\right\}dt,
\]
where $\sigma(t)$ and $\gamma(t)$ are $d\times d$-matrices, and $b(t)$ is $d$-dimensional vector. Assume that for all $i,j=1, \ldots, d$, $\sigma_{ij}, \gamma_{ij}$ and $b_i$ are continuous on $[0,T^*]$, and the following uniform ellipticity holds:
\[
\sum_{i,j} a_{ij}(t)\lambda_i\lambda_j \geq c \|\lambda\|^2, \qquad\forall x, \lambda \in \bbR^d \mbox{ for some } c>0.\] 

Then, the above SDE has a unique strong solution, which is a Gaussian semimartingale. Moreover, (see Section 10.2 in \cite{kali})
\bean 
m(s,t,x):=E[X_t|X_s=x]=F(t)F^{-1}(s)x + F(t)\int_s^t F^{-1}(u) b(u)du \\
\Sigma(s,t):=E(X_t-m(s,t,X_s))^2|X_s=x]=F(t)\int_s^t \left(F^{-1}(u)\sigma(u)\right)\left(F^{-1}(u)\sigma(u)\right)^*du F^*(t),
\eean
where $F^{-1}$ is the solution of the equation
\[
\frac{dF^{-1}(t)}{dt}=-F^{-1}(t) \gamma(t), \qquad F^{-1}(0)=I.
\]
When $\gamma \equiv 0$, the smoothness of the transition density of $X$ follows from smoothness of the fundamental solution of $u_t = A_t u$ by Theorem 10 of Chap. 1 of \cite{friedman}. Moreover, Assumption \ref{smb:a:pbdd} is satisfied due to the estimates of the fundamental solution given by Theorem 11 of Chap. 1 of \cite{friedman} that have the  form (\ref{smb:e:fud}). Strict positivity of the fundamental solution follows from Theorem 11 of Chap. 2 of \cite{friedman}. Thus, Theorems \ref{smb:th:MB} and \ref{smb:th:sunique} apply to give the existence and uniqueness of a strong solution to the SDE 
\[
X_t=x +\int_0^t \sigma(s)dB_s +\int_0^t \left\{b(s) -a(s)\Sigma^{-1}(s,T^*)\left(z-X_s -\int_s^{T^*}b(u)du\right)\right\}ds.
\]
The general case follows from the transformation $Y_t=F^{-1}(t)X_t $:
\bean
X_t&=& x+ \int_0^t \sigma(s)dB_s +\int_0^t \left\{b(s) + \gamma(s) X_s\right\}ds\\
&&-\int_0^t a(s)(F(T^*)F^{-1}(s))^*\Sigma^{-1}(s,T^*)\left(z-F(T^*)F^{-1}(s)X_s -F(T^*)\int_s^{T^*}F^{-1}(u)b(u)du\right)ds.
\eean
\end{example}

\section{Weak conditioning} 
As mentioned in Section 2 $h$-functions can be employed to obtain weak conditioning. Since $h$-functions lead to positive martingales, one can use them to change the measure. The advantage of such measure changes is that it preserves the Markov property. The proof of this fact will be based on Lemma \ref{smb:l:Mft}, which is a minor modification of Theorem 4.2.1 (ii) in \cite{SV}.

\begin{lemma} \label{smb:l:Mft} Let $f \in C^{1,2}([0,T^*]\times\bfE)$ if $T^*<\infty$; or $f \in C^{1,2}([0,T^*)\times\bfE)$ if $T^*=\infty$. Then, $\left((M^f_t)_{t \in \bfT_s}, (\cB_t)_{t \in \bfT_s}\right)$ is a local martingale for any solution $P^{s,x}$ of the local martingale problem for $A$, where
\[
M^f_t= f(t,X_t)-f(s,X_s)-\int_s^t \left\{\frac{\partial }{\partial u}f(u,X_u) +A_u f(u,X_u) \right\}du.
\]
\end{lemma}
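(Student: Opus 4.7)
The strategy is to bootstrap from the time-independent local martingale problem (which only tests against $f \in C^{\infty}(\bfE)$) to the time-inhomogeneous $C^{1,2}$ class, following the scheme of Theorem 4.2.1(ii) in \cite{SV}. The four core steps are: treat pure-product test functions, extend by linearity, approximate a compactly supported $C^{1,2}$ function by such sums, and finally localize to remove the compact-support restriction.

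I would begin with $f(u,x) = g(u)\phi(x)$ where $g \in C^{1}(\bfT_s)$ and $\phi \in C^{\infty}(\bfE)$. By Part \ref{a:Awp} of Assumption \ref{smb:a:A}, $M^{\phi}_t := \phi(X_t) - \phi(X_s) - \int_s^t A_u \phi(X_u)\,du$ is a $P^{s,x}$-local martingale. Since $g$ is deterministic and $C^{1}$, ordinary integration by parts yields
\[
g(t)\phi(X_t) - g(s)\phi(X_s) = \int_s^t g(u)\,dM^{\phi}_u + \int_s^t g'(u)\phi(X_u)\,du + \int_s^t g(u) A_u \phi(X_u)\,du,
\]
so $M^{f}_t = \int_s^t g(u)\,dM^{\phi}_u$ is a local martingale. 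Linearity then gives the same conclusion for finite sums $f = \sum_{k=1}^n g_k(u)\phi_k(x)$.

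Next, for a general $f \in C^{1,2}$ with compact support in $[s,T] \times K$, where $T<T^*$ and $K \subset \bfE$ is compact, I would invoke a tensor-product density result (Stone--Weierstrass applied to polynomials in $u$ times smooth functions of $x$, or a direct partition-of-unity/mollification argument) to obtain approximants $f_n = \sum_k g_k^{(n)}(u)\phi_k^{(n)}(x)$ such that $f_n, \partial_u f_n$, and all first and second spatial derivatives of $f_n$ converge uniformly on $[s,T]\times K$ to the corresponding derivatives of $f$. Local boundedness of $a_{ij}$ and $b_i$ on $K$ then yields uniform convergence of $\partial_u f_n + A_u f_n$ to $\partial_u f + A_u f$, and the local martingale property survives the limit.

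Finally, for general $f \in C^{1,2}$, fix an exhaustion $K_1 \Subset K_2 \Subset \cdots$ of $\bfE$, smooth cutoffs $\chi_n$ with $\chi_n \equiv 1$ on $K_n$ and supported in $K_{n+1}$, and stopping times $\tau_n = \inf\{u \ge s : X_u \notin K_n\}$. Since $\chi_n f$ agrees with $f$ on $K_n$ and has compact support, the previous step gives a local martingale $M^{\chi_n f}$; by construction $M^{f}_{t \wedge \tau_n} = M^{\chi_n f}_{t \wedge \tau_n}$, so $M^f$ is locally a local martingale, hence a local martingale (well-posedness rules out explosion, so $\tau_n \uparrow T^*$). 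The main obstacle is the approximation step: one must ensure the spatial factors $\phi_k^{(n)}$ lie in $C^{\infty}(\bfE)$ rather than merely $C^{2}$, which on a manifold with boundary $\bfE = \bigtimes_i [l_i,\infty)$ requires a careful preliminary mollification that respects the boundary (or, in the case $f \in C^{1,2}([0,T^*)\times\mbox{int}(\bfE))$, confinement to an interior compact exhaustion). Once this is handled, the remaining ingredients are routine.
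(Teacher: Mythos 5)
Your proof is correct in outline, but it takes a genuinely different route from the one the paper relies on. The paper does not prove Lemma \ref{smb:l:Mft}; it cites Theorem 4.2.1(ii) of \cite{SV}, and the paper's own Appendix \ref{s:app} (Theorem \ref{g:t:lm2ws}) points to the cleaner argument that reference uses: under $P^{s,x}$ the coordinate process is a continuous semimartingale with $dX_t = b(t,X_t)\,dt + \sigma(t,X_t)\,dW_t$, obtained by feeding the time-homogeneous test functions $x\mapsto x_i$ and $x\mapsto x_ix_j$ (which belong to $C^{\infty}(\bfE)$, exactly what Assumption \ref{smb:a:A}(\ref{a:Awp}) tests against) into the martingale problem after localization; then Itô's formula applied to $f\in C^{1,2}$ gives $M^f_t=\int_s^t (\nabla f\,\sigma)(u,X_u)\,dW_u$ at once, with no density argument. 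Your tensor-product route is more self-contained in that it never leaves the martingale-problem framework, but the approximation step you flag as the main obstacle really is the crux and should be spelled out: plain Stone--Weierstrass controls only $f$ itself, not $\partial_u f$ nor the first two spatial derivatives, so you need a two-stage argument (mollify $f$ to a $C^{\infty}$ function with $C^{1,2}$-uniform convergence on compacts, then invoke the density of $C^{\infty}_c(\bbR)\otimes C^{\infty}_c(\bbR^d)$ in $C^{\infty}_c(\bbR^{1+d})$ for the topology of uniform convergence of all derivatives on compacts, followed by restriction to $\bfE$). Two small fixes: in the localization step, take $\chi_n\equiv 1$ on an open neighbourhood of $K_n$ so that all derivatives of $\chi_n$ vanish on $K_n$, which is what actually makes $M^f_{t\wedge\tau_n}=M^{\chi_n f}_{t\wedge\tau_n}$ immediate; and $\tau_n\uparrow T^*$ follows from the assumed path continuity of $X$ on $[0,T^*]$ (resp.\ $[0,\infty)$), not from well-posedness as such.
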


\begin{theorem} \label{sbm:th:hpath} Suppose that the conditions of Assumption \ref{smb:a:A} hold and $T^* <\infty$. Let $h$ be an $h$-function such that $h(T^*,\cdot)>0$ and $h \in C^{1,2}([0,T^*]\times \bfE)$.  Define  $P^{h;s,x}$ on $(\Om, \cB_{T^*})$ by $\frac{dP^{h;s,x}}{dP^{s,x}}=\frac{h(T^*,X_{T^*})}{h(s,x)}$. Then, $P^{h;s,x}$ is the unique solution of the local martingale problem for $A^h$ starting from $x$ at $s$, where
\[
A^h_t =A_t+ \sum_{i.j=1}^d a_{ij}(t,x)\frac{\frac{\partial h}{\partial x_j}(t,x)}{h(t,x)}\frac{\partial}{\partial x_i}.
\]
Consequently, $X$ is a strong Markov process under every $P^{h;s,x}$ for $s<T^*$, $x \in \bfE$   and the associated transition function, $(P^h_{s,t})$ is related to $(P_{s,t})$ via
\be \label{smb:e:htff}
P^h_{s,t}(x,E)=\frac{1}{h(s,x)} \int_E h(t,y)P_{s,t}(x,dy), \qquad x \in \bfE, E\in \sE, t \in \bfT_s.
\ee
\end{theorem}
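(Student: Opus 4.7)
The plan is to first verify that $P^{h;s,x}$ is a bona fide probability measure and then establish the martingale problem for $A^h$ by combining Lemma \ref{smb:l:Mft} with a change-of-measure computation. Since $h$ is an $h$-function, $(h(t,X_t))_{t \in \bfT_s}$ is a strictly positive $P^{s,x}$-martingale with $\bbE^{P^{s,x}}[h(T^*,X_{T^*})]=h(s,x)$, so the candidate density has expectation one and $D_t:=h(t,X_t)/h(s,x)$ is the density process of $P^{h;s,x}$ relative to $P^{s,x}$. Applying Lemma \ref{smb:l:Mft} to $h$ itself, and using that $h(\cdot,X_\cdot)$ is a true (not merely local) martingale, the continuous bounded-variation process $t\mapsto \int_s^t (\partial_u h+A_u h)(u,X_u)\,du$ is a $P^{s,x}$-local martingale of finite variation, hence vanishes identically. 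This gives the pathwise form of $(\partial_u+A_u)h=0$ that drives the calculation below.

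Fix $f\in C^\infty(\bfE)$ and set $\bar f(t,x):=f(x)h(t,x)$, which lies in $C^{1,2}([0,T^*]\times\bfE)$. The product rule for the second-order operator $A_u$ gives
\[
(\partial_u+A_u)\bar f = h\,(\partial_u+A_u)f + f\,(\partial_u+A_u)h + \sum_{i,j}a_{ij}\frac{\partial f}{\partial x_i}\frac{\partial h}{\partial x_j}=h\left[A_u f + \sum_{i,j}a_{ij}\frac{\partial h/\partial x_j}{h}\frac{\partial f}{\partial x_i}\right]=h\,A^h_u f,
\]
using $\partial_u f=0$ and the vanishing of $(\partial_u+A_u)h$ established above. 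Lemma \ref{smb:l:Mft} therefore yields that $M^{\bar f}_t:=\bar f(t,X_t)-\bar f(s,X_s)-\int_s^t h(u,X_u)A^h_u f(X_u)\,du$ is a $P^{s,x}$-local martingale. Set $G_t:=\int_s^t A^h_u f(X_u)\,du$ and $N^f_t:=f(X_t)-f(X_s)-G_t$. An integration by parts between the continuous semimartingale $h(\cdot,X_\cdot)$ and the continuous finite-variation process $G$ (with zero covariation) then yields, after substituting for $\bar f(t,X_t)$ from the definition of $M^{\bar f}$,
\[
h(t,X_t)N^f_t = M^{\bar f}_t - f(X_s)\bigl[h(t,X_t)-h(s,X_s)\bigr] - \int_s^t G_u\,dh(u,X_u),
\]
which is a sum of three $P^{s,x}$-local martingales. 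The standard change-of-measure lemma with density process $D$ then gives that $N^f$ is a $P^{h;s,x}$-local martingale, and hence $P^{h;s,x}$ solves the local martingale problem for $A^h$ starting from $(s,x)$.

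For uniqueness I invert the $h$-transform. Since $h\in C^{1,2}$ is strictly positive on $[0,T^*]\times\bfE$, so is $1/h$, and a short computation gives $(\partial_t+A^h_t)(1/h)=-h^{-2}(\partial_t+A_t)h=0$. Hence, for any solution $Q$ of the local martingale problem for $A^h$ starting from $(s,x)$, the analogue of Lemma \ref{smb:l:Mft} applied to the operator $A^h$ shows that $1/h(\cdot,X_\cdot)$ is a positive $Q$-local martingale. Continuity of $h$ and of the paths of $X$ imply that $h\circ X$ is bounded away from $0$ and $\infty$ on the compact interval $[s,T^*]$, so the stopping times $\tau_n:=\inf\{t\geq s:h(t,X_t)\notin(1/n,n)\}$ satisfy $\tau_n\nearrow T^*$ $Q$-almost surely. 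Stopping produces a bounded positive martingale, so I may define $\tilde Q_n$ on $\cB_{T^*\wedge\tau_n}$ by $d\tilde Q_n/dQ=h(s,x)/h(T^*\wedge\tau_n,X_{T^*\wedge\tau_n})$. Running the previous paragraph's computation in reverse (with $A^h$ and $1/h$ replacing $A$ and $h$, and using $(\partial_t+A^h_t)(1/h)=0$) shows that $\tilde Q_n$ solves the local martingale problem for $A$ up to time $T^*\wedge\tau_n$. Well-posedness from Assumption \ref{smb:a:A}(\ref{a:Awp}) pins down $\tilde Q_n$, and reversing the measure change pins down $Q$ on $\cB_{T^*\wedge\tau_n}$. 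Letting $n\to\infty$ gives uniqueness of $Q$ on $\cB_{T^*}=\vee_n \cB_{T^*\wedge\tau_n}$.

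The remaining claims are direct consequences. Well-posedness of the local martingale problem for $A^h$ starting from every $(s,x)$ delivers the strong Markov property of $X$ under $(P^{h;s,x})$ via Theorem 4.4.2 in \cite{EK}. Formula (\ref{smb:e:htff}) is obtained by conditioning and the martingale property of $h(\cdot,X_\cdot)$ under $P^{s,x}$:
\[
P^h_{s,t}(x,E) = \bbE^{P^{s,x}}\!\left[\chf_{\{X_t\in E\}}\frac{h(T^*,X_{T^*})}{h(s,x)}\right] = \frac{1}{h(s,x)}\bbE^{P^{s,x}}\!\left[\chf_{\{X_t\in E\}} h(t,X_t)\right] = \frac{1}{h(s,x)}\int_E h(t,y)\,P_{s,t}(x,dy).
\]
The main obstacle will be the uniqueness step: the reverse $h$-transform involves $1/h$, which is a priori unbounded, so one must localize by $\tau_n$, and the crucial structural input that makes this localization succeed is the assumption $h(T^*,\cdot)>0$ combined with $h\in C^{1,2}([0,T^*]\times\bfE)$, which together force $h\circ X$ to stay bounded away from $0$ and $\infty$ on the \emph{closed} interval $[s,T^*]$.
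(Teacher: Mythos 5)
Your forward direction is correct and in substance the same as the paper's: both rest on Lemma \ref{smb:l:Mft} applied to products of the form $f\cdot h$, the pathwise vanishing of $\int_s^t(\partial_u h + A_u h)(u,X_u)\,du$ under $P^{s,x}$, and the martingale property of $h(\cdot,X_\cdot)$ to transfer the local martingale property across the change of measure. The paper carries out the $P^{h;s,x}$-expectation computation directly, while you package it as ``$D_t N^f_t$ is a $P^{s,x}$-local martingale, hence $N^f$ is a $P^{h;s,x}$-local martingale'' via integration by parts and the standard density-process lemma; this is the same estimate in slightly more conceptual clothing. Your identification of $(\partial_u + A_u)(fh) = h A^h_u f$ under the pathwise identity $(\partial_u + A_u)h=0$ is correct and matches the cancellation the paper performs.

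The uniqueness step has a genuine gap, however. You claim that for an \emph{arbitrary} solution $Q$ of the local martingale problem for $A^h$, the process $1/h(\cdot,X_\cdot)$ is a $Q$-local martingale, justified by ``$(\partial_t+A^h_t)(1/h)=-h^{-2}(\partial_t+A_t)h=0$.'' But $(\partial_t+A_t)h=0$ was established only as a $dt\times P^{s,x}$-a.e.\ statement along paths (via Lemma \ref{smb:l:Mft} and the finite-variation local martingale argument), not as a pointwise identity — and it cannot be pointwise in general since $a,b$ are merely Borel measurable. To use it under $Q$ you would need $Q\ll P^{s,x}$, which is precisely what the uniqueness argument is supposed to deliver, so the reasoning as written is circular. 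The localization by $\tau_n$ is the right move and your observation that $h(T^*,\cdot)>0$ plus $h\in C^{1,2}([0,T^*]\times\bfE)$ keeps $h(\cdot,X_\cdot)$ in a compact subinterval of $(0,\infty)$ is correct, but the localization does not help with the prior question of whether $1/h(\cdot,X_\cdot)$ is a $Q$-local martingale at all. A clean repair that does not need the pointwise identity is to pass to the SDE representation (Theorem \ref{g:t:lm2ws}): under $Q$, $X$ solves $dX=(b+a\frac{(\nabla h)^*}{h})\,dt+\sigma\,dB$; the extra drift has the form $\sigma v$ with $v=\sigma^*(\nabla h)^*/h$ and $|v|^2=(\nabla h)a(\nabla h)^*/h^2$ locally bounded along continuous paths, so a localized Girsanov argument transfers weak uniqueness from the $b$-drift SDE (Assumption \ref{smb:a:A}(\ref{a:Awp})) to the $b^h$-drift one. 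The paper's own uniqueness remark (``follows easily'') glosses over the same subtlety, so your instinct to localize was well directed — it is the specific claim that $(\partial_t+A_t)h$ vanishes $Q$-a.e.\ that needs replacing.
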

\begin{proof}
Consider an $f \in C^{\infty}(\bfE)$ and let
\[
M^f_t(h)= f(X_t)-f(X_s) -\int_s^t A^h_r f(X_r)\,dr.
\]
Observe that
\be \label{smb:e:mh-m}
M^f_t(h)-M^f_t=-\sum_{i,j=1}^d \int_s^t  a_{ij}(v,X_v)\frac{\frac{\partial h}{\partial x_j}(v,X_v)}{h(v,X_v)}\frac{\partial f}{\partial x_i}(X_v)\,dv.
\ee
Thus, if we let $\tau_n=T^*\wedge \inf\{t \geq s: |M^f_t(h)| \geq n\} \wedge \inf\{t \geq s: \|X_t\| \geq n\}$, then $\tan_n$ is a stopping time and  $\tau_n \rar T^*$, $P^{h;s,x}$-a.s. as $n \rar \infty$ since $M^f$ and $X$ are  continuous  under $P^{s,x}$, and $P^{s,x} \sim P^{h;s,x}$. In particular,  (\ref{smb:e:mh-m}) also entails $(M^f_{t \wedge \tau_n})$ is a bounded martingale under $P^{s,x}$. We will now see that $(M^f_{t \wedge \tau_n}(h))$ is a martingale under $P^{h;s,x}$. Indeed, for any $u \in [s,t]$ and $E \in \cB_u$, 
\bean
&&h(s,x)E^{h;s,x}\left[\left(M^f_{t\wedge \tau_n}(h)-M^f_{u}(h)\right)\chf_{[\tau_n>u]}\chf_E\right]=E^{s,x}\left[h(t,X_t) \left(M^f_{t\wedge \tau_n}(h)-M^f_{u}(h)\right)\chf_{[\tau_n >u]}\chf_E\right]\\
&=&E^{s,x}\left[h(t,X_t) \left(M^f_{t\wedge \tau_n}-M^f_{u}\right)\chf_{[\tau_n>u]}\chf_E\right]\\
&&-E^{s,x}\left[\chf_{[\tau_n>u]}\chf_E\sum_{i,j=1}^d \int_u^{t\wedge \tau_n} h(t,X_t)  a_{ij}(v,X_v)\frac{\frac{\partial h}{\partial x_j}(v,X_v)}{h(v,X_v)}\frac{\partial f}{\partial x_i}(X_v)\,dv\right]\\
&=&E^{s,x}\left[h(t,X_t) \left(f(X_{t\wedge \tau_n})-f(u,X_u)\right)\chf_{[\tau>u]}\chf_E\right]\\
&&-E^{s,x}\left[\chf_{[\tau_n>u]}\chf_Eh(t,X_t) \int_u^{t \wedge \tau_n} A_v f(X_v)\,dv\right]\\
&&- E^{s,x}\left[\chf_{[\tau_n>u]}\chf_E\sum_{i.j=1}^d \int_u^{t\wedge \tau_n}  a_{ij}(v,X_v)\frac{\partial h}{\partial x_j}(v,X_v)\frac{\partial f}{\partial x_i}(X_v)\,dv\right],
\eean
where the last equality follows from the martingale property of $h(t,X_t)$ under $P^{s,x}$. Letting $g= h f$, observing \[
E^{s,x}\left[h(t,X_t) \left(f(X_{t\wedge \tau_n})-f(u,X_u)\right)\chf_{[\tau_n>u]}\chf_E\right]=E^{s,x}\left[\left(g(t\wedge \tau_n, X_{t\wedge \tau_n})-g(u,X_u)\right)\chf_{[\tau_n>u]}\chf_E\right]
\]
 by the same martingale argument, and utilising Lemma \ref{smb:l:Mft}, we have
\bean
&&h(s,x) E^{h;s,x}\left[\left(M^f_{t\wedge \tau_n}(h)-M^f_{u}(h)\right)\chf_{[\tau_n>u]}\chf_E\right]\\
&=&E^{s,x}\left[\left(M^g_{t\wedge \tau_n}-M^g_u+\int_u^{t \wedge \tau_n} \left\{f(X_v) \frac{\partial h}{\partial u}(v, X_v)+ A_v g(v,X_v)\right\}\,dv\right) \chf_{[\tau_n>u]}\chf_E\right]\\
&&-E^{s,x}\left[ \int_u^{t \wedge \tau_n} h(t,X_t) A_v f(X_v)\,dv\chf_{[\tau_n>u]}\chf_E\right]\\
&&- E^{s,x}\left[\chf_{[\tau_n>u]}\chf_E\sum_{i,j=1}^d \int_u^{t\wedge \tau_n}  a_{ij}(u,X_u)\frac{\partial h}{\partial x_j}(v,X_v)\frac{\partial f}{\partial x_i}(X_v)\,dv\right]\\
&=&E^{s,x}\left[\chf_{[\tau_n>u]}\chf_E \int_u^{t \wedge \tau_n} \left\{f(X_v) \frac{\partial h}{\partial u}(v, X_v)+ A_v g(v,X_v)\right\}dv\right]\\
&&-E^{s,x}\left[\chf_{[\tau_n>u]}\chf_E \int_u^{t \wedge \tau_n} h(v,X_v) A_v f(X_v)\,dv\right]\\
&&-E^{s,x}\left[\chf_{[\tau_n>u]}\chf_E \sum_{i,j=1}^d \int_u^{t\wedge \tau_n}  a_{ij}(u,X_u)\frac{\partial h}{\partial x_j}(v,X_v)\frac{\partial f}{\partial x_i}(X_v)\,dv\right]=0,
\eean
where the second equality holds since $(M^g_{t \wedge \tau_n})$ is a bounded martingale due to the boundedness of $(M^f_{t \wedge \tau_n})$ and the smoothness of $h$, and the last equality follows from the identity $\frac{\partial h}{\partial u}(v, x)+A_v h(v,x)=0$.  As $\tau_n \rar T^*$, $P^{h;s,x}$-a.s., we conclude that $P^{h;s,x}$ solves the local martingale problem. The uniqueness follows easily due to the one-to-one relationship between $P^{h;s,x}$ and $P^{s,x}$ since the local martingale problem for $A$ is well-posed.

The strong Markov property is a direct consequence of the well-posedness of the local martingale problem for $A^h$ via Theorem 4.4.2 in \cite{EK}. The form of the transition function follows directly from the explicit absolute continuity relationship between the measures $P^{h;s,x}$ and $P^{s,x}$.
\end{proof}
The last theorem gives us a conditioning on the path space when $T^*<\infty$ and $h$ satisfies the conditions of the theorem. The coordinate process after this conditioning is often referred to as the {\em h-path process} in the literature. Note that the $h$-function of Example \ref{smb:x:h1} does not satisfy the conditions of the above theorem as $h(T^*, \cdot)$ is not strictly positive. This implies that we cannot use this theorem to condition the coordinate process to end up in a given set. However, $h(t,\cdot)$ is strictly positive and smooth for any $t<T^*$, which allows us to extend the results of the previous theorem  to the case when $h(T^*, \cdot)$ does not satisfy the conditions as well as when $T^*=\infty$. For making this extension possible we first introduce a new canonical space $C([0,T^*), \bfE)$ and   $\cB^{-}_t=\sigma(X_s; s \leq t), \, \cB^{-}_{T^*}=\vee_{t<T^*} \cB^{-}_t$, where $X$ is the coordinate process on $C([0,T^*), \bfE)$. Note that there is no difference between the $\sigma$-algebras $\cB^{-}_t$  and  $\cB_t$  when $T^*=\infty$. The main difference in the case $T^*<\infty$  is the measurable space on which each $\sigma$-algebra is defined. While the former is defined on the space of functions that are continuous on $[0,T^*)$, the latter is defined on the paths that are continuous on $[0,T^*]$. In particular, the functions that are divergent as $t \rar T^*$ belong to the former but not the latter, which in turn implies  $\cB^{-}_{T^*}$ has more elements than  $\cB_{T^*}$. On the other hand, one can easily verify  that there is a one-to-one correspondence between the members of $\cB^{-}_t$  and  those of $\cB_t$ for $t<T^*$. In view of these observations the following fact can be established as a special case of Theorem 1.3.5 in \cite{SV}. 
\begin{theorem} \label{smb:th:Pextend} Let $(t_n)$ be an increasing sequence of deterministic times with $t_n<T^*$ for each $n$ and  suppose  that for each $n$ there exists a probability measure $P^n$ on $(C([0,T^*), \bfE), \cB^{-}_{t_n})$. If $P^{n+1}$ agrees with $P^n$ on $\cB^{-}_{t_n}$ and $\lim_{n \rar \infty}t_n=T^*$, then there exists a unique probability measure, $P$, on $(C([0,T^*), \bfE), \cB^{-}_{T^*})$ that agrees with $P^n$ on $\cB_{t_n}^-$ for all $n \geq 0$.
\end{theorem}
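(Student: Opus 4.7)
The plan is to realize this as a Carath\'eodory extension, where the delicate point is countable additivity on the generating algebra. Set $\Om'=C([0,T^*),\bfE)$ and consider the algebra $\cA=\bigcup_n \cB^-_{t_n}$. Since $t_n \uparrow T^*$ we have $\sigma(\cA)=\cB^-_{T^*}$. Define a set function $P$ on $\cA$ by $P(A)=P^n(A)$ whenever $A\in \cB^-_{t_n}$; the hypothesis $P^{n+1}|_{\cB^-_{t_n}}=P^n$ makes $P$ well-defined, finitely additive, and with $P(\Om')=1$. If $P$ extends to a countably additive measure on $\sigma(\cA)$, uniqueness of the extension is immediate from the $\pi$-$\lambda$ theorem (or from Carath\'eodory), so the whole proof reduces to verifying continuity from above at $\emptyset$ on $\cA$.

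For that, I would exploit the Polish structure. For each $n$, the restriction map $\pi_n:\Om'\to C([0,t_n],\bfE)$ induces a bijection between $\cB^-_{t_n}$ and the Borel $\sigma$-algebra on the Polish space $C([0,t_n],\bfE)$; write $A=\pi_n^{-1}(B)$ and let $\wtilde P^n$ be the corresponding push-forward probability on $C([0,t_n],\bfE)$. Consistency then reads $(\pi_{n,n+1})_*\wtilde P^{n+1}=\wtilde P^n$, where $\pi_{n,n+1}:C([0,t_{n+1}],\bfE)\to C([0,t_n],\bfE)$ is the restriction. Each $\wtilde P^n$ is tight on its Polish space, and this is the ingredient that will substitute for compact cylinder sets in the standard Kolmogorov argument.

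The main obstacle, and the heart of the argument, is ruling out a sequence $A_k \in \cA$ with $A_k \downarrow \emptyset$ and $P(A_k)\ge 2\delta$. After replacing $A_k$ by $\bigcap_{j\le k}A_j$ and passing to a subsequence I may assume $A_k\in \cB^-_{t_{n_k}}$ with $n_k$ strictly increasing. Using tightness, pick compact $K_k \subset B_k \subset C([0,t_{n_k}],\bfE)$ with $\wtilde P^{n_k}(B_k\setminus K_k)<\delta 2^{-k}$, and set $C_k=\pi_{n_k}^{-1}(K_k)$ and $D_k=A_k\cap\bigcap_{j\le k} C_j \in \cB^-_{t_{n_k}}$. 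A union bound gives $P(A_k\setminus D_k)<\delta$, hence $P(D_k)>\delta$ and in particular $D_k\ne\emptyset$. Choose $\om_k\in D_k$; then $\pi_{n_j}(\om_k)\in K_j$ for every $j\le k$. A diagonal extraction along the compacts $K_1,K_2,\dots$ produces a subsequence $(\om_{k_\ell})$ such that for each fixed $j$, $\pi_{n_j}(\om_{k_\ell})$ converges uniformly on $[0,t_{n_j}]$ to some $\om^{(j)}\in K_j$; consistency of the restrictions produces a single $\om^\star\in \Om'$ with $\pi_{n_j}(\om^\star)=\om^{(j)}\in K_j\subset B_j$. Thus $\om^\star\in A_j$ for every $j$, contradicting $A_k\downarrow \emptyset$. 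This verifies countable additivity on $\cA$, and Carath\'eodory's theorem supplies the unique extension $P$ to $\cB^-_{T^*}$ which by construction agrees with $P^n$ on $\cB^-_{t_n}$ for each $n$. \qed
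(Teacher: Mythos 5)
Your proof is correct. The paper gives no argument of its own here; it simply cites Theorem 1.3.5 of Stroock--Varadhan, which is a Kolmogorov-type extension theorem for consistent families of probability measures on increasing sub-$\sigma$-fields of a Polish path space. Your Carath\'eodory-plus-tightness argument --- reduce to continuity at $\emptyset$ on the generating algebra $\bigcup_n\cB^-_{t_n}$, push forward through the restriction maps to the Polish spaces $C([0,t_n],\bfE)$, then use inner regularity by compacts and a diagonal extraction to produce a point of the would-be empty intersection --- is precisely the mechanism underlying that cited result, so what you have written is a self-contained verification of the fact the paper invokes by reference rather than a genuinely different route.
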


\begin{corollary} \label{smb:c:hpath} Suppose that the conditions of Assumption \ref{smb:a:A} hold and let $h$ be an $h$-function. For any $s <T^*$ and $x \in \bfE$, there exists a unique probability measure $P^{h;s,x}$ on\footnote{Recall that $\Om=C([0,T^*], \bfE)$ (resp. $\Om=C([0,\infty), \bfE)$)  when $T^*<\infty$ (resp. $T^*=\infty$).} $(\Om, \cB_{T^*})$ which solves the local martingale problem for $A^h$, where
\[
A^h_t =A_t+ \sum_{i.j=1}^d a_{ij}(t,x)\frac{\frac{\partial h}{\partial x_j}(t,x)}{h(t,x)}\frac{\partial}{\partial x_i}
\]
on $[0, T] \times \bfE$ starting from $x$ at $s$ for any $T<T^*$. 
Consequently, $X$ is a strong Markov process under every $P^{h;s,x}$ for $s<T^*$, $x \in \bfE$ and the associated transition function, $(P^h_{s,t})$ is related to $(P_{s,t})$ via
\be \label{smb:e:htf}
P^h_{s,t}(x,E)=\frac{1}{h(s,x)} \int_E h(t,y)P_{s,t}(x,dy), \qquad x \in \bfE, \, E\in \sE,  t \in \bfT_s.
\ee
\end{corollary}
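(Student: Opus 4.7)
The strategy follows the outline laid out by the discussion preceding Theorem \ref{smb:th:Pextend}: invoke Theorem \ref{sbm:th:hpath} on each closed sub-interval $[s,t_n]$ for an increasing sequence $t_n\uparrow T^*$ and glue the resulting consistent family of measures via Theorem \ref{smb:th:Pextend}. The point is that, although the full $h$-function need not be $C^{1,2}$ up to $T^*$ and may vanish there (as in Example \ref{smb:x:h1}), its restriction to any $[0,t_n]\times\bfE$ with $t_n<T^*$ is smooth and strictly positive, so Theorem \ref{sbm:th:hpath} applies at each finite horizon $t_n$.

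Fix $s<T^*$, $x\in\bfE$, and pick $(t_n)$ with $s<t_0$, $t_n<T^*$, and $t_n\uparrow T^*$. Since $h\in C^{1,2}([0,T^*)\times\bfE)$ and is strictly positive there, the restriction $h|_{[0,t_n]\times\bfE}$ is in $C^{1,2}([0,t_n]\times\bfE)$, satisfies $h(t_n,\cdot)>0$, and $(h(t,X_t))_{t\in[0,t_n]}$ is a martingale under every $P^{0,y}$ by restriction. Thus $h|_{[0,t_n]\times\bfE}$ is an $h$-function on the shorter horizon $t_n$ meeting the hypotheses of Theorem \ref{sbm:th:hpath}. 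That theorem produces a probability measure $P^n$ with $dP^n/dP^{s,x}=h(t_n,X_{t_n})/h(s,x)$, which is the unique solution of the local martingale problem for $A^h$ on $[s,t_n]$ starting from $x$ at $s$.

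For $m>n$, the restriction of $P^m$ to $\cB^{-}_{t_n}$ still solves the local martingale problem for $A^h$ on $[s,t_n]$ started at $(s,x)$: a localising sequence on $[s,t_m]$ may be intersected with $t_n$ to produce a localising sequence on $[s,t_n]$. By the uniqueness in Theorem \ref{sbm:th:hpath} applied at horizon $t_n$, this restriction equals $P^n$, so the consistency hypotheses of Theorem \ref{smb:th:Pextend} are met. The latter theorem then yields a unique probability measure $P^{h;s,x}$ on $(C([0,T^*),\bfE),\cB^{-}_{T^*})$ that agrees with each $P^n$ on $\cB^{-}_{t_n}$; we identify this with a measure on $(\Om,\cB_{T^*})$ via the equality $\cB_{T^*}=\vee_{t<T^*}\cB_t$, which makes the value of the path at $T^*$ invisible to the $\sigma$-algebra. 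For any $T<T^*$, choosing $n$ with $t_n\geq T$ shows that $P^{h;s,x}$ solves the local martingale problem for $A^h$ on $[s,T]$, and uniqueness is inherited at each finite horizon. Well-posedness on every $[s,T]$ delivers the strong Markov property through Theorem 4.4.2 in \cite{EK}, and for $s\leq r<t<T^*$ the transition-function formula (\ref{smb:e:htf}) is inherited directly from the corresponding formula of Theorem \ref{sbm:th:hpath} applied to any $P^n$ with $t_n\geq t$.

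The main delicate point is the consistency step: to justify that the restriction of $P^m$ to $\cB^{-}_{t_n}$ solves the sub-interval problem one must truncate the localising sequences correspondingly, and it is this bookkeeping that makes the uniqueness statement of Theorem \ref{sbm:th:hpath} available at every finite horizon and hence usable in Theorem \ref{smb:th:Pextend}. A secondary notational subtlety is the identification between measures on $C([0,T^*),\bfE)$ and on $\Om$, which is harmless because $\cB_{T^*}$ is generated by events strictly before $T^*$.
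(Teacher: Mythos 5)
Your gluing step via Theorem~\ref{smb:th:Pextend} is fine, and the consistency argument (truncate localising sequences, invoke uniqueness of Theorem~\ref{sbm:th:hpath} at horizon $t_n$) is a valid, if slightly more roundabout, substitute for the paper's direct verification that $Q^{t+T}$ and $Q^T$ agree on $\cB_T$ via the martingale property of $h(t,X_t)$. The $T^*=\infty$ case is covered by your argument.

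For $T^*<\infty$, however, there is a genuine gap at the end, exactly where you write ``we identify this with a measure on $(\Om,\cB_{T^*})$ via the equality $\cB_{T^*}=\vee_{t<T^*}\cB_t$, which makes the value of the path at $T^*$ invisible to the $\sigma$-algebra.'' Theorem~\ref{smb:th:Pextend} produces a measure $Q$ on $(C([0,T^*),\bfE),\cB^-_{T^*})$, whose support may include paths that diverge or oscillate without limit as $t\uparrow T^*$. By contrast, $\Om=C([0,T^*],\bfE)$ consists of paths that extend continuously to $T^*$, which is a strictly smaller set, and $\cB^-_{T^*}$ is strictly larger than $\cB_{T^*}$ (the paper points this out explicitly before Theorem~\ref{smb:th:Pextend}). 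There is no canonical way to ``identify'' a measure on the larger space with one on $\Om$: one must first \emph{prove} that $\lim_{t\uparrow T^*}X_t$ exists $Q$-a.s. The paper does this by showing $Q$ is absolutely continuous with respect to the image $\hat P^{s,x}$ of $P^{s,x}$ on $(C([0,T^*),\bfE),\cB^-_{T^*})$ -- the Radon--Nikodym density is $L_{T^*}=\lim_{t\uparrow T^*} h(t,X_t)/h(s,x)$, which exists and closes the martingale because the $h$-martingale is non-negative and uniformly integrable (it is a martingale on the closed interval $[0,T^*]$ by the definition of an $h$-function when $T^*<\infty$); absolute continuity is then extended from the $\pi$-system $\cup_{t<T^*}\cB^-_t$ to $\cB^-_{T^*}$ by Dynkin's $\pi$--$\lambda$ theorem, and from it the a.s.\ Cauchy property of $(X_{t_n})$ under $Q$ is deduced. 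Only then can one define $X^h_{T^*}=\lim_{t\uparrow T^*}X_t$ and let $P^{h;s,x}$ be the law of $(X^h_t)_{t\le T^*}$ on $(\Om,\cB_{T^*})$. Your proposal skips this entire step, and without it the conclusion that $P^{h;s,x}$ lives on $(\Om,\cB_{T^*})$ is unjustified.
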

\begin{proof} Suppose $T^*=\infty$ and let $T<\infty$. Define a probability measure, $Q^T$, on $(\Om, \cB_{T^*})$ via $\frac{dQ^T}{d P^{s,x}}=\frac{h(T,X_T)}{h(s,x)}$. Then, by Theorem \ref{sbm:th:hpath}, restriction of $Q^T$ to $\cB_T$ is the unique solution to the local martingale problem for $A^h$ starting from $x$ at time $s$ on $[0,T]\times \bfE$. Moreover, $Q^T$ and $Q^{t+T}$ agree on $\cB_T$ for all $t>0$. Indeed, for any $E\in \cB_T$, 
\[
h(s,x) Q^{t+T}(E)=E^{s,x}[h(t+T, X_{t+T})1_E]=E^{s,x}[h(T, X_{T})1_E]=h(s,x)Q^T(E),
\]
implying $Q^{t+T}(E)=Q^{T}(E)$ since $h$ is strictly positive. Thus, by Theorem \ref{smb:th:Pextend}, there exists a unique measure $Q$ on $(C([0,\infty),\bfE), \cB_{\infty})$, solving the local martingale problem for $A^h$ on $[0,T]\times \bfE$ for all $T<\infty$. Consequently,  the local martingale problem is well-posed and  the strong Markov property follows. 

The case $T^*<\infty$ requires more care. Let $\hat{P}^{s,x}$ be the law of $(X_t)_{t \in [s, T^*)}$ under $P^{s,x}$, where $X$ is the coordinate process of $C([0,T^*],\bfE)$.  $\hat{P}^{s,x}$ is a probability measure on $(C([0,T^*),\bfE), \cB_{T^*}^{-})$ such that the corresponding coordinate process admits limits as $t \uar T^*$ with probability $1$. 

Using the function $h$ as a measure change we can again obtain a sequence of measures $(Q^T)_{T<T^*}$ with the property that $Q^T$ and $Q^{t+T}$ agree on $\cB^{-}_T$ for all $0\leq t<T^*-T$. Theorem \ref{smb:th:Pextend} now yields a probability measure, $Q$, on $(C([0,T^*),\bfE), \cB_{T^*}^{-})$ that agrees with $Q^T$ on $\cB^{-}_T$ for $T<T^*$.

We will use this $Q$ to construct the $P^{h;s,x}$ on $(C([0,T^*],\bfE), \cB_{T^*})$. However, in order to do so, we need to establish that under $Q$ the coordinate process, $X$,  of  $C([0,T^*),\bfE)$ admits a limit as $t\uar T^*$.  We will achieve  this by showing  that $Q$ is absolutely continuous with respect to  $\hat{P}^{s,x}$ on $\cB_{T^*}^{-}$. Indeed, for any $E \in \cB_t^{-}$ for some $t<T^*$, we have
\be \label{smb:e:AC}
Q(E)=\hat{E}^{s,x}\left[\frac{h(t, X_t)}{h(s,x)} \chf_E\right]=\hat{E}^{s,x}\left[L_{T^*} \chf_E\right],
\ee
where $0\leq L_{T^*}=\lim_{t \rar T^*}\frac{h(t, X_{t})}{h(s,x)}=\frac{h(T^*, X_{T^*-})}{h(s,x)}$. The existence of this limit and the exchange of expectation and limit are justified since  $(h(t,X_t))_{t \in [0,T^*)}$ is a positive uniformly integrable $\hat{P}^{s,x}$-martingale. Also note that $\hat{E}^{s,x}[L_{T^*}]=1$. 

Define
\[
\lambda=\left\{E\in \cB_{T^*}^-: Q(E)=\hat{E}^{s,x}\left[L_{T^*} \chf_E\right]\right\}.
\]
Clearly, $\lambda$ satisfies the conditions of Dynkin's $\pi-\lambda$ Theorem (see, e.g., Theorem 1.3.2 in \cite{Billingsley}). Moreover, 
\[
\pi=\{E: E \in \cB_t^{-} \mbox{ for some } t<T^*\} \subset \lambda
\]
is closed under intersection. Thus, by Dynkin's $\pi-\lambda$ Theorem the equality (\ref{smb:e:AC}) holds for all $E \in \cB_{T^*}^{-}$ implying the claimed absolute continuity. 

We now claim that the  sequence $(X_{t_n})$ with $t_n \uar T^*$ is $Q$-a.s. Cauchy.  Observe that on the set 
\[
E=\cap_{m \geq 1}\cup_{n \geq 1} \cap_{k \geq n}\left \{\om: \|X_{t_k}(\om)-X_{t_{k-1}}(\om)\| < \frac{1}{m}\right\} \in \cB_{T^*}^{-},
\]
the sequence is Cauchy.  Moreover, $\hat{P}^{s,x}(E)=1$ which together with (\ref{smb:e:AC}) implies $Q(E)=1$.  Thus, $\lim_{t \rar T^*} X_t$ exists, $Q$-a.s.. 

Next, define $X^h$ by $X^h_t=X_t$ for $t <T^*$, and $X^h_{T^*}=\lim_{t \rar T^*} X_t$. If we denote by $P^{h;s,x}$ the law of $X^h$, then     it is easily seen that it is a probability measure on $(C([0,T^*],\bfE), \cB_{T^*})$. Thus, we have shown in view of (\ref{smb:e:AC}) that for any $E \in \cB_t$
\be \label{smb:e:AC2}
P^{h;s,x}(E) =E^{s,x}\left[\frac{h(t,X_t)}{h(s,x)}\chf_E\right]
\ee
for all $s\leq t\leq T^*$.

To show the uniqueness assume that there exists another measure $\tilde{P}^{h;s,x}$ on $(C([0,T^*],\bfE), \cB_{T^*})$ which solves the local martingale problem for $A^h$. Then, the restriction of this measure to $\cB_T$ for $T<T^*$ solves the local martingale problem for $A^h$ when solutions have sample paths in $C([0,T],\bfE)$. However, this local martingale problem is well-posed due to the one-to-one correspondence with the martingale problem for $A$ when solutions have sample paths in $C([0,T],\bfE)$ via Girsanov transform since $h(T,\cdot)>0$. Thus, by Theorem \ref{smb:th:Pextend} $\tilde{P}^{h;s,x}$ and $P^{h;s,x}$ agree on $(C([0,T^*],\bfE), \cB_{T^*})$. This proves the well-posedness of the local martingale problem for $A^h$ and, therefore, via Theorem 4.4.2 in \cite{EK}, the strong Markov property holds for $X$ under $P^{h;s,x}$.

Finally, the representation for the transition function when $T^*<\infty$ follows from (\ref{smb:e:AC2}). When $T^*=\infty$ the required representation can be obtained from   the one  associated with $Q^T$, which is given by Theorem \ref{sbm:th:hpath}, for any $T$ satisfying $s<t<T$ since $Q$ agrees with $Q^T$ on $\cB_T$.
\end{proof}
In view of the relationship between the solutions of the local martingale problem and the weak solutions of SDEs (Theorem \ref{g:t:lm2ws}) yields Theorem \ref{c:hsde}.

\section{Strong conditioning}
Now we consider the problem of strong conditioning  of a Markov process. Intuitively, one can see that such conditioning can be done via ``$h(t,x)=P^{t,x}(X_T^*=z)$''. As $[X_T^*=z]$ is most likely a null set, the definition in the quotation marks shouldn't be taken too literally.  However, it guides us how to proceed. Suppose that $P_{s,t}$ admits a density $p_{s,t}$ belonging to $C^{1,2}([0,T^*)\times \bfE)$. Thus, if it is strictly positive it can be used as an $h$-function. The problem is that this function explodes at $t=T^*$, which is in fact the very reason why this conditioning works, so we cannot directly use Corollary \ref{smb:c:hpath}. However, it can be applied locally, i.e. until times away from $T^*$, to produce a family of measures on the canonical space. If,  additionally, the family of solutions to the local martingale problem is weakly continuous,  it is possible to demonstrate that this family converges weakly to a probability measure on the canonical space yielding the bridge condition.  In addition we will obtain a stochastic differential equation associated with the bridge process. 
\\

{\bf \sc Proof of Theorem \ref{smb:th:MB}.}\hspace{3mm}
Let $h(t,x)=p(T^*-t,x,z)$ and  define $Q^T$, on $(C([0,T^*],\bfE), \cB_{T^*})$ by $\frac{dQ^T}{dP^{x}}=\frac{h(T,X_T)}{h(0,x)}$. First, we will show that  $(Q^T)$ converge weakly, as $T \rar T^*$,  to a probability measure, $P^{x \rar z}_{0 \rar T^*}$, on $(C([0,T^*],\bfE), \cB_{T^*})$ such that $P^{x \rar z}_{0 \rar T^*}(X_T^*=z)=1$. Usually this is achieved in two steps: 1) Verifying that the family of measures is tight. 2) Demonstrating that the finite-dimensional distributions of the coordinate process under $Q^T$ converge to those under $P^{x \rar z}_{0 \rar T^*}$.  

In view of  Theorem 2.7.2 in \cite{BillingsleyC}  since  $Q^T(X_0=x)=1$ for all $T \in [0,T^*)$,  the tightness will follow once we show that for any  $c>0$
\be \label{smb:e:tight}
\lim_{\delta \rar 0} \limsup_{T \rar T^*} Q^T(w(X,\delta, [0,T^*]) > 8 c)=0,
\ee
where
\[
w(X, \delta, [S,T])= \sup_{\substack{ |s-t|\leq \delta \\ s,t  \in [S,T]} }\|X_s-X_t\|.
\]
We will first obtain some estimates on the modulus of continuity in a neighbourhood of $T^*$.  To this end let $Z_{\delta}=w(X,\delta, [0,\delta])$ and observe that 
\be \label{smb:e:setd}
[Z_{\delta} \circ \theta_{T^*-\delta} > 4c] \subset [ Z_{T^*-T} \circ \theta_{T} > 2 c] \cup [Z_{T-T^*+ \delta} \circ \theta_{T^*-\delta} > 2 c], \qquad \forall T>T^*-\delta.
\ee 
To get an estimate on the probability of the left hand side of the above, we will first consider the first set of the right hand side.
\bea
Q^T( Z_{T^*-T} \circ \theta_{T} > 2c)&=& E^{x}\left[\chf_{[ Z_{T^*-T} \circ \theta_{T} >2 c]}\frac{p(T^*-T, X_T,z)}{p(T^*,x,z)}\right] \nn \\
&=&E^{x}\left[P^{X_T}(Z_{T^*-T}>2 c)\frac{p(T^*-T, X_T,z)}{p(T^*,x,z)}\chf_{[X_T \in B_1(z)]}\right] \nn\\
&&+E^{x}\left[P^{X_T}(Z_{T^*-T}>2 c)\frac{p(T^*-T, X_T,z)}{p(T^*,x,z)}\chf_{[X_T \notin B_1(z)]}\right], \label{smb:e:2est1.2}
\eea
where the first equality is due to the definition of $Q^T$ and the second is the Markov property.

Since $(p(T^*-t, X_t, z))_{t \in [0,T]}$ is a martingale, we have
\be  \label{smb:e:est1.1a}
E^{x}\left[P^{X_T}(Z_{T^*-T}>2 c)\frac{p(T^*-T, X_T,z)}{p(T^*,x,z)}\chf_{[X_T \in B_1(z)]}\right]\leq  \sup_{y \in B_1(z)}P^y(Z_{T^*-T}>2 c).
\ee
Observe that $\lim_{h \rar 0} P^y (Z_h > 2c)=0$. To see this note that the sets $[Z_h >2c]$ are decreasing to a set in $\wtilde{\cF_0}$ and therefore by Blumenthal's zero-one law,  probability of the limiting set is either $0$ or $1$. If the limiting probability is $1$, this implies that $P^y(Z_h >2c)=1$ for all $h$, which in turn means that in every neighbourhood of $0$ there exist a time at which the value of the process is $c$ away from its value at the origin. This contradicts the continuity of $X$, therefore $\lim_{h \rar 0} P^y(Z_h >2c)=0$. 

This observation allows us to conclude that for any compact subset, $K$, of $\bfE$
\be \label{smb:e:est1.1.0}
 \lim_{h \rar 0} \sup_{y \in K}P^y(Z_{h}>2 c)=0.
 \ee
 Indeed, if the above fails, there exists a sequence of $(y_n)$ and $(h_n)$ such  that $h_n \rar 0$, $y_n \rar y \in K$ with
 \[
 0 < \liminf_{n \rar \infty}P^{y_n}(Z_{h_n} >2c) \leq \lim_{m \rar \infty} \liminf_{n \rar \infty}P^{y_n}(Z_{h_m} >2c) = \lim_{m \rar \infty}P^{y}(Z_{h_m} >2c)=0
 \]
by the weak continuity of the laws $P^x$, which is a contradiction. This, together with (\ref{smb:e:est1.1a}), implies that 
 \be  \label{smb:e:est1.1}
\lim_{T \rar T^*} E^{x}\left[P^{X_T}(Z_{T^*-T}>2 c)\frac{p(T^*-T, X_T,z)}{p(T^*,x,z)}\chf_{[X_T \in B_1(z)]}\right] =0.
\ee
The same limit holds for the second term of (\ref{smb:e:2est1.2}). Indeed,
\bean
&&E^{x}\left[P^{X_T}(Z_{T^*-T}>2 c)\frac{p(T^*-T, X_T,z)}{p(T^*,x,z)}\chf_{[X_T \notin B_1(z)]}\right]\leq  E^{x}\left[\frac{p(T^*-T, X_T,z)}{p(T^*,x,z)}\chf_{[X_T \notin B_1(z)]}\right]\\
&=&\frac{1}{p(T^*,x,z)}\int_{B_1^c(z)} p(T^*-T,y,z)p(T,x,y)m(dy),
\eean
which converges to $0$ as $T \rar T^*$ by (\ref{smb:e:dual}) or (\ref{smb:e:dualinf}). 

Combining the above with (\ref{smb:e:est1.1}) and (\ref{smb:e:2est1.2}) yields
\be \label{smb:e:est1}
\lim_{T \rar T^*} Q^T( Z_{T^*-T} \circ \theta_{T} > 2c)=0.
\ee

Next, we will show that $\lim_{\delta \rar 0}\limsup_{T \rar T^*} Q^T(Z_{T-T^*+ \delta} \circ \theta_{T^*-\delta} > 2 c)=0$. Let 
\bean
\tau^{\delta}&:=&\inf\{t\geq 0:\sup_{0 \leq s \leq t}X_s-\inf_{0 \leq s \leq t}X_s>2c\}\wedge \delta\wedge T^*,\\
\tau_c&=&\inf\{t\geq 0: X_t \notin B_{\frac{c}{2}}(X_{0})\}\wedge \delta\wedge T^*,
\eean
where $\inf\emptyset=\infty$.

Observe that 
\be \label{smb:e:tauc}
[Z_{T-T^*+ \delta} \circ \theta_{T^*-\delta} > 2 c]=[T^*-\delta + \tau^{\delta}\circ \theta_{T^*-\delta} <T] \subset [T^*-\delta + \tau_c\circ \theta_{T^*-\delta} <T]. 
\ee
Thus,
\bea
\lim_{T \rar T^*}Q^T(Z_{T-T^*+ \delta} \circ \theta_{T^*-\delta} > 2 c)&=&\lim_{T\rar T^*}\frac{E^x[\chf_{[T^*-\delta + \tau^{\delta}\circ \theta_{T^*-\delta} <T]}p(T^*-T, X_T,z)]}{p(T^*,x,z)} \nn \\
&=&\lim_{T\rar T^*}\frac{E^x[\chf_{[T^*-\delta + \tau^{\delta}\circ \theta_{T^*-\delta} <T]}p(\delta - \tau^{\delta}\circ \theta_{T^*-\delta}, X_{T^*-\delta + \tau^{\delta}\circ \theta_{T^*-\delta} },z)]}{p(T^*,x,z)}\nn \\
&=& \frac{E^x[\chf_{[ \tau^{\delta}\circ \theta_{T^*-\delta} <\delta]}p(\delta - \tau^{\delta}\circ \theta_{T^*-\delta}, X_{T^*-\delta + \tau^{\delta}\circ \theta_{T^*-\delta} },z)]}{p(T^*,x,z)}, \label{smb:e:est2.1T}
\eea
where the second equality follows from the Optional Stopping Theorem applied to the martingale $M:=(p(T^*-t, X_t, z))_{t \in [0,T^*)}$, and the last is due to the Monotone Convergence Theorem. 

\noindent {\bf Case 1:} Suppose Assumption \ref{smb:a:pbdd} holds. Note that the numerator in (\ref{smb:e:est2.1T}) can be rewritten as
\bean
&&E^x[\chf_{[X_{T^*-\delta}\in B_{\frac{c}{4}}(z)]}\chf_{[ \tau^{\delta}\circ \theta_{T^*-\delta} <\delta]}M_{T^*-\delta + \tau^{\delta}\circ \theta_{T^*-\delta}}]+E^x[\chf_{[X_{T^*-\delta}\notin B_{\frac{c}{4}}(z)]}\chf_{[ \tau^{\delta}\circ \theta_{T^*-\delta} <\delta]}M_{T^*-\delta + \tau^{\delta}\circ \theta_{T^*-\delta}}]\\
&&\leq  E^x[\chf_{[X_{T^*-\delta}\in B_{\frac{c}{4}}(z)]}\chf_{[ \tau_c\circ \theta_{T^*-\delta} <\delta]}M_{T^*-\delta + \tau_c\circ \theta_{T^*-\delta}}]+E^x[\chf_{[X_{T^*-\delta}\notin B_{\frac{c}{4}}(z)]}M_{T^*-\delta}]
\eean
Observe that under (\ref{smb:e:tdbound}) we have $\chf_{[X_{T^*-\delta}\in B_{\frac{c}{4}}(z)]}\chf_{[ \tau_c\circ \theta_{T^*-\delta} <\delta]}M_{T^*-\delta + \tau_c\circ \theta_{T^*-\delta}}$ and $\chf_{[X_{T^*-\delta}\notin B_{\frac{c}{4}}(z)]}M_{T^*-\delta}$ uniformly bounded in $\delta$. Thus, dominated convergence theorem yields
\[
\lim_{\delta \rar 0} \lim_{T \rar T^*}Q^T(Z_{T-T^*+ \delta} \circ \theta_{T^*-\delta} > 2 c)=0
\]
in view of $P^x(\lim_{t \rar T^*}M_t=0)=1$. 
Combining the above with (\ref{smb:e:setd}) and (\ref{smb:e:est1}) yields 
\be \label{smb:e:est2.1}
\lim_{\delta \rar 0} \lim_{T \rar T^*} Q^T(Z_{\delta} \circ\theta_{T^*-\delta} >4 c) =0.
\ee

\noindent {\bf Case 2:}  Suppose Assumption \ref{smb:a:potden} holds.
For all $t >0$ and $\delta \leq t$ define 
\[
\varphi(t, \delta,x)=E^x[\chf_{[ \tau^{\delta}\circ \theta_{t-\delta} <\delta ]}p(\delta- \tau^{\delta}\circ \theta_{t-\delta} , X_{t-\delta + \tau^{\delta}\circ \theta_{t-\delta}},z)].
\]
Observe that for every $t>0$ the map $\delta \mapsto \varphi(t,\delta,x)$ is increasing. Indeed, consider $M^t_s=p(t-s, X_s, z)$ and note that in view of Lemma \ref{smb:l:potential} 
\[
\varphi(t,\delta,x)= E^x[M^t_{t-\delta + \tau^{\delta}\circ \theta_{t-\delta}}].
\]
The claim follows since the stopping times $t-\delta + \tau^{\delta}\circ \theta_{t-\delta}$ are decreasing in $\delta$  and $M^t$ is a supermartingale on $[0,t]$.

Due to (\ref{smb:e:est2.1T}) we have 
\be \label{smb:e:phi}
\lim_{T \rar T^*}Q^T(Z_{T-T^*+ \delta} \circ \theta_{T^*-\delta} > 2 c)=\frac{\varphi(T^*,\delta,x)}{p(T^*,x,z)}.
\ee
Our next goal is to show that $\lim_{\delta \rar 0} \varphi(T^*,\delta,x)=0$. The first step towards this goal is to obtain that $\varphi(t,0,x):=\lim_{\delta \rar 0} \varphi(t,\delta,x)=0$ for almost every $t$. Since $\varphi(t, \cdot, x)$ is increasing for every $t>0$, in view of Lemma \ref{smb:l:LT} it is enough to show that 
\[
\lim_{\alpha \rar \infty} \alpha\int_0^{\infty} \int_0^t e^{-\alpha s -\beta t} \varphi(t,s,x) dsdt =0, \; \forall \beta >0.
\]
We will find an upper bound to this Laplace transform by considering the following decomposition.
\bea
\varphi(t, \delta,x)&=&\int_{y \in B_{\frac{c}{4}}(z)} E^y[\chf_{[\tau^{\delta}<\delta]}p(\delta-\tau^{\delta}, X_{\tau^{\delta}},z)]p(t-\delta, x,y)m(dy) \nn \\
&&+\int_{y \in B^c_{\frac{c}{4}}(z)} E^y[\chf_{[\tau^{\delta}<\delta]}p(\delta-\tau^{\delta}, X_{\tau^{\delta}},z)]p(t-\delta, x,y)m(dy) \nn\\
&\leq & \int_{y \in B_{\frac{c}{4}}(z)} E^y[\chf_{[\tau_c<\delta]}p(\delta-\tau_c, X_{\tau_c},z)]p(t-\delta, x,y)m(dy) \label{smb:e:est3.1} \\
&&+ \int_{y \in B^c_{\frac{c}{4}}(z)} p(\delta,y,z) p(t-\delta, x,y)m(dy) \label{smb:e:est3.2},
\eea
where the inequality is due to Lemma \ref{smb:l:potential}. 

Note that
\bean
&&\int_0^{\infty} \int_0^t e^{-\alpha \delta -\beta t} E^y[\chf_{[\tau_c<\delta]}p(\delta-\tau_c, X_{\tau_c},z)]p(t-\delta, x,y) d\delta dt\\
&&= E^y \int_{\tau_c}^{\infty} \int_{\tau_c}^t e^{-\alpha \delta -\beta t} p(\delta-\tau_c, X_{\tau_c},z)p(t-\delta, x,y) d\delta dt \\
&&= E^y \left\{e^{-(\alpha+\beta)\tau_c} \int_{0}^{\infty} \int_{0}^t e^{-\alpha \delta -\beta t} p(\delta, X_{\tau_c},z)p(t-\delta, x,y) d\delta dt \right\}\\
&&= E^y \left\{e^{-(\alpha+\beta)\tau_c} \int_{0}^{\infty} \int_{\delta}^\infty e^{-\alpha \delta -\beta t} p(\delta, X_{\tau_c},z)p(t-\delta, x,y) dt d\delta\right\}  \\
&&=E^y \left\{e^{-(\alpha+\beta)\tau_c} \int_{0}^{\infty} e^{-(\alpha+\beta) \delta }p(\delta, X_{\tau_c},z)d\delta \right\} \int_{0}^\infty e^{-\beta t} p(t, x,y) dt\\
&&=E^y\left[ e^{-(\alpha+\beta)\tau_c}u^{\alpha+\beta}(X_{\tau_c},z)\right]u^{\beta}(x,y).
\eean
Due to (\ref{smb:e:bddpotential}) $\sup_{\alpha >0, w \in \partial B_{\frac{c}{2}(z)}}\alpha u^{\alpha+\beta}(w,z)<\infty$. Moreover, 
\[
\int_{B_{\frac{c}{4}}(z)}u^{\beta}(x,y)m(dy)=\int_0^{\infty} e^{-\beta t}P^x(X_t \in B_{\frac{c}{4}}(z))dt <\infty.
\]
Thus,   the Dominated Convergence Theorem yields
\bea
&&\lim_{\alpha\rar \infty} \alpha \int_0^{\infty} \int_0^t e^{-\alpha \delta -\beta t} \int_{y \in B_{\frac{c}{4}(z)}}E^y[\chf_{[\tau_c<\delta]}p(\delta-\tau_c, X_{\tau_c},z)]p(t-\delta, x,y) m(dy) d\delta dt \nn \\
&&=\int_{y \in B_{\frac{c}{4}(z)}} E^y\left[\lim_{\alpha\rar \infty}  \alpha  e^{-(\alpha+\beta)\tau_c}u^{\alpha+\beta}(X_{\tau_c},z)\right]u^{\beta}(x,y)m(dy) =0 \label{smb:e:est3.3},
\eea
since $P^y(\tau_c=0)=0$ by the continuity of $X$.   

Next we turn to (\ref{smb:e:est3.2}). Note that 
 \[
\int_0^{t} \alpha e^{-\alpha \delta} \int_{y \in B^c_{\frac{c}{4}}(z)} p(\delta,y,z) p(t-\delta, x,y)m(dy) d\delta \leq p(t, x,z)\int_0^{t} \alpha e^{-\alpha \delta} d\delta\leq p(t, x,z).
 \]
Since 
\[
\int_0^{\infty}e^{-\beta t} p(t, x,z)dt=u^{\beta}(x,z)<\infty,
\]
the Dominated Convergence Theorem yields
\bean
&&\lim_{\alpha\rar \infty} \alpha \int_0^{\infty} \int_0^t e^{-\alpha \delta -\beta t} \int_{y \in B^c_{\frac{c}{4}}(z)} p(\delta,y,z) p(t-\delta, x,y)m(dy)  d\delta dt\\
&&= \int_0^{\infty} e^{-\beta t}\lim_{\alpha\rar \infty} \alpha \int_0^t e^{-\alpha \delta}  \int_{y \in B^c_{\frac{c}{4}}(z)} p(\delta,y,z) p(t-\delta, x,y)m(dy)  d\delta dt.
\eean
Since $\int_{y \in B^c_{\frac{c}{4}}(z)} p(\delta,y,z) p(t-\delta, x,y)m(dy)\leq p(t,x,z)$ and converges to $0$ as $\delta \rar 0$ in view of (\ref{smb:e:dualinf}), Lemma \ref{smb:l:LT} yields  
\[
\lim_{\alpha\rar \infty} \alpha \int_0^{\infty} \int_0^t e^{-\alpha \delta -\beta t} \int_{y \in B^c_{\frac{c}{4}}(z)} p(\delta,y,z) p(t-\delta, x,y)m(dy)  d\delta dt=0.
\]
Combining the above with (\ref{smb:e:est3.3}), (\ref{smb:e:est3.2}), and (\ref{smb:e:est3.1}) gives 
\[
\lim_{\alpha \rar \infty} \alpha\int_0^{\infty} \int_0^t e^{-\alpha s -\beta t} \varphi(t,s,x) dsdt =0, \; \forall \beta >0.
\]
Thus, $\varphi(t,0,x)=0$ for almost every $t>0$.

Define $D:=\{t>0:\varphi(t,0,x)=0\}$, let  $S \in D$ and consider $s <S$. Then, by Chapman-Kolmogorov identity we have
\bean
0=\varphi(S,0,x)&=&\lim_{\delta \rar 0} \varphi(S,\delta,x) \\
&=&   \lim_{\delta \rar 0} \int \varphi(s,\delta,y) p(S-s,x,y)m(dy)\\
&=&\int \varphi(s,0,y) p(S-s,x,y)m(dy),
\eean
where the last equality is due to the Dominated Convergence Theorem since $\varphi(s,\delta,y)\leq p(s,y,z)$.  Thus, $\varphi(s,0,y) p(S-s,x,y)=0$ for $m$-a.e. $y$, which implies $\varphi(s,0,y)=0, \, m$-a.e. if $p(t,x,y)>0$ for all $t>0$ and $m$-a.e. $y$. On the other hand, if $t \mapsto p(t,x,y)$ is continuous on $(0, \infty)$,  the fact that $D$ is dense in $(0, \infty)$ yields that for any $t>s$, $\varphi(s,0,y) p(t-s,x,y)=0$ for $m$-a.e. $y$. Therefore, under either assumption, we have 
\bean
\varphi(T^*,0,x)&=&\lim_{\delta \rar 0} \int \varphi(s,\delta,y) p(T^*-s,x,y)m(dy) \\
&=&\int \varphi(s,0,y) p(T^*-s,x,y)m(dy)=0.
\eean
Combining the above with (\ref{smb:e:setd}) and (\ref{smb:e:est1}) yields 
\be \label{smb:e:est2.2}
\lim_{\delta \rar 0} \lim_{T \rar T^*} Q^T(Z_{\delta} \circ\theta_{T^*-\delta} >4 c) =0
\ee
in view of (\ref{smb:e:phi}).

The above as well as (\ref{smb:e:est2.1}) implies that for any $\eps>0$ there exist $\hat{\delta}$ and $\hat{T} > T^*-\hat{\delta}$ such that for all $T>\hat{T}$ we have
\[
Q^T(w(X, \hat{\delta}, [T^*-\hat{\delta}, T^*])> 4 c)< \frac{\eps}{2}.
\]
As $w(X, \delta, [u,v])$ is increasing in $\delta$, we can conclude
\[
Q^T(w(X, \delta, [T^*-\hat{\delta}, T^*])> 4 c)< \frac{\eps}{2}, \qquad \forall \delta <\hat{\delta}, T>\hat{T}.
\]
Since
\[
Q^T(w(X,\delta, [0,T^*]) >8 c) \leq Q^T(w(X,\delta, [0,T^*-\hat{\delta}]) >4 c) + Q^T(w(X,\delta, [T^*-\hat{\delta},T^*]) >4 c),
\]
it remains to show that
\[
\lim_{\delta \rar 0} \limsup_{T \rar T^*} Q^T(w(X,\delta, [0,T^*-\hat{\delta}]) >4 c) =0.
\]
However, for $T> \hat{T}$, one has $T^*-\hat{\delta}<T$, thus for such $T$
\[
Q^T(w(X,\delta, [0,T^*-\hat{\delta}]) >4 c) =E^x\left[ \chf_{[w(X,\delta, [0,T^*-\hat{\delta}]) >4 c]}\frac{p(T^*-\hat{T}, X_{\hat{T}},z)}{p(T^*,x,z)}\right].
\]
Then, by the Dominated Convergence Theorem,
\[
\lim_{\delta \rar 0} \limsup_{T \rar T^*} Q^T(w(X,\delta, [0,T^*-\hat{\delta}]) >4 c) =E^x\left[ \lim_{\delta \rar 0} \chf_{[w(X,\delta, [0,T^*-\hat{\delta}]) >4 c]}\frac{p(T^*-\hat{T}, X_{\hat{T}},z)}{p(T^*,x,z)}\right]=0.
\]
To show the convergence of the finite dimensional distributions consider $[0, T^*)$ as a dense subset of $[0,T^*]$ and note that for any finite set ${t_1, \ldots, t_k} \subset [0,T^*)$ and a bounded continuous function $f:\bfE^k \mapsto \bbR$
\[
\lim_{T^ \rar T^*} E^{Q^T}[f(X_{t_1}, \ldots, X_{t_k})]=E^{Q^{t_k}}[(X_{t_1}, \ldots, X_{t_k})],
\]
which establishes the desired convergence.

Thus, the sequence of measures, $(Q^T)$, has a unique limit point $P^{x \rar z}_{0 \rar T^*}$ on $(C([0,T^*],\bfE), \cB_{T^*})$. Moreover, its restriction, $Q^T$, to  $(C([0,T],\bfE), \cB_{T})$ for any $T<T^*$ is a solution to the local martingale problem for $A^p$ on  $[0,T] \times \bfE$.  In particular, for any $E \in \cB_T$, we have $P^{x \rar z}_{0 \rar T^*}(E)=E^{x}\left[\frac{h(T,X_T)}{h(0,x)}\chf_E\right]$.

Next, we show the bridge condition. To this end pick $f \in C^{\infty}_K(\bfE)$,  $\eps >0$ and consider $r>0$ such that $\sup_{y \in B_r(z)}|f(y)-f(z)|<\eps$. Thus,
\bean
 &&E^{x \rar z}_{0 \rar T^*}[f(X_{T^*})]=\lim_{T \rar T^*} E^{x \rar z}_{0 \rar T^*}[f(X_{T})]=\lim_{T \rar T^*}E^{x}\left[\frac{h(T,X_T)}{h(0,x)}f(X_{T})\right]\\
 &=&f(z)+\lim_{T \rar T^*}\frac{ E^{x}\left[p(T^*-T,X_T,z)(f(X_{T})-f(z))\right]}{p(T^*,x,z)}\\
 &=&f(z)+\lim_{T \rar T^*}\int_{B_r(z)} \frac{p(T^*-T,y,z)p(T,x,y)}{p(T^*,x,z)}\left(f(y)-f(z)\right)m(dy) \\
 &&+ \lim_{T \rar T^*}\int_{B_r^c(z)} \frac{p(T^*-T,y,z)p(T,x,y)}{p(T^*,x,z)}\left(f(y)-f(z)\right)m(dy).
 \eean
Since $f$ is bounded the second integral above converges to $0$ in view of (\ref{smb:e:dual}) or (\ref{smb:e:dualinf}). 
 Moreover,
\[
\left|\int_{B_r(z)} \frac{p(T^*-T,y,z)p(T,x,y)}{p(T^*,x,z)}\left(f(y)-f(z)\right)m(dy) \right|\leq \eps,
\]
which implies the bridge condition by the arbitrariness of $\eps$.

Existence of a weak solution to (\ref{smb:e:sdeB}) follows from Girsanov's Theorem. Indeed, since the martingale problem for $A$ is well-posed there exists a unique weak solution to 
\[
X_t=x+\int_0^t b(X_u)du+\int_0^t \sigma(X_u)\,dB_u,
\]
by Theorem \ref{g:t:lm2ws}. With an abuse of notation, we denote the associated probability measure with $P^x$. The above considerations show that there exists a probability measure, $P^{x \rar z}_{0 \rar T^*}$, which is locally absolutely continuous with respect $P^x$ in the sense that for any $t<T^*$ and $E\in \cF_t$ $P^{x \rar z}_{0 \rar T^*}(E)=E^x\left[\chf_E \frac{h(t,X_t)}{h(0,x)}\right]$. Thus, an application of Girsanov's Theorem yields the conclusion. 

To show that $\inf_{t \in [0,T]} h(t,X_t)>0$, $P^{x \rar z}_{0 \rar T^*}$-a.s. for any $T<T^*$ consider $T_n=\inf\{t\geq 0: h(t,X_t)\leq \frac{1}{n}\}$ and observe that  $P^{x \rar z}_{0 \rar T^*}(T_n <t)=E^x\left[\chf_{[T_n<t]}\frac{h(t,X_t)}{h(0,x)}\right]\leq \frac{1}{n h(0,x)} \rar 0$ as $n \rar \infty$. 

To show the Markov property let $s<t<T^*$, $f$ be bounded and measurable, and $E \in \cB_s$. Then,
\[
E^{x \rar z}_{0 \rar T^*}[f(X_t)\chf_E]=E^x[f(X_t)h(t,X_t)\chf_E]=E^{x \rar z}_{0 \rar T^*}\left[\frac{E^x[f(X_t)h(t,X_t)|X_s]}{h(s,X_s)}\chf_E\right],
\]
where we used the positivity of $h(t,X_t)$ under $P^x$ for the second equality. 

Similarly, if $g$ is also bounded and measurable, we have in addition
\[
E^{x \rar z}_{0 \rar T^*}[f(X_t)g(X_s)]=E^{x \rar z}_{0 \rar T^*}\left[\frac{E^x[f(X_t)h(t,X_t)|X_s]}{h(s,X_s)}g(X_s)\right].
\]
This shows the Markov property and yields the representation of the transition function.

Finally, the weak uniqueness can be proved following the same steps as in the proof of Corollary \ref{smb:c:hpath}.
\qed
\\

Clearly, if the conditions of Theorem \ref{smb:th:MB} are satisfied by all $x \in \bfE$ and $h(t, \cdot)>0$ for $t<T^*$, there exists a unique weak solution to  (\ref{smb:e:smSDE}) for any $s<T^*$ and $x \in \bfE$. Thus, Theorem \ref{g:t:lm2ws} implies  well-posedness of the local martingale problem which in turn yields the strong Markov property of its solutions. More precisely, the following holds.
\begin{corollary} \label{smb:c:smarkov} Let $\sigma$ be a matrix field such that $\sigma \sigma^*=a$.  Suppose that Assumption \ref{smb:a:pbdd} or \ref{smb:a:potden} is in force, $T^*<\infty$. Fix $z \in \bfE$ satisfying $m(\{z\})=0$ and define $h(t,y):=p(T^*-t,y,z)$ such that the following conditions hold for all $x \in \bfE$:
\begin{enumerate}
\item $h(t,x)>0$ for all $t \in [0,T^*)$.  
\item $h \in C^{1,2}([0,T^*), \bfE)$.
\item If Assumption \ref{smb:a:potden} is enforced, then
\begin{itemize}
\item[i)] $u^{\alpha}(x,z)<\infty$ for $\alpha>0$,
\item[ii)] either the map $t\mapsto p(t,x,y)$ is continuous on $(0, \infty)$ for every $y \in \bfE$, or for all $t>0$ $p(t,x,y)>0, \,m$-a.e. $y$. 
\end{itemize}
\end{enumerate}
Then  there exists a unique weak solution  on $[s,T^*]$  to 
\be \label{smb:e:smSDE}
X_t=x+\int_s^t \left\{ b(X_u)+ a(X_u)\frac{(\nabla h(u,X_u))^*}{h(u,X_u)}\right\}du+\int_s^t \sigma(X_u)\,dB_u,
\ee
the law of which, $P^{x \rar z}_{s \rar T^*}$, satisfies  $P^{x \rar z}_{s \rar T^*} (\inf_{u \in [s,T]}h(u,X_u) =0)=0$ for any $T<T^*$, and $P^{x \rar z}_{s \rar T^*}(X_{T^*}=z)=1$. 
 Moreover, the solution has the strong Markov property.
\end{corollary}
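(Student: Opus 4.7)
The plan is to deduce the corollary from Theorem \ref{smb:th:MB} by varying the initial condition $(s,x)$, and then pass to strong Markov via the equivalence between weak well-posedness of the SDE and well-posedness of the local martingale problem (Theorem \ref{g:t:lm2ws}) combined with Theorem 4.4.2 of \cite{EK}.

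First, I would fix an arbitrary $(s,x) \in [0,T^*) \times \bfE$ and observe that the proof of Theorem \ref{smb:th:MB}, although written for initial time $0$, depends only on the behaviour of the transition density and the weak convergence of the measures $Q^T$ as $T \uparrow T^*$; these arguments are insensitive to the choice of initial time. Thus the identical proof, with $0$ replaced by $s$ throughout, delivers a probability measure $P^{x \rar z}_{s \rar T^*}$ on $(\Om, \cB_{T^*})$ under which the coordinate process is a weak solution on $[s,T^*]$ of (\ref{smb:e:smSDE}), satisfies the non-explosion property $P^{x \rar z}_{s \rar T^*}(\inf_{u \in [s,T]} h(u,X_u) = 0) = 0$ for every $T<T^*$, and fulfils the bridge condition $X_{T^*}=z$ almost surely. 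Moreover, the weak uniqueness part of Theorem \ref{smb:th:MB}, which uses only that $h(t,\cdot)>0$ for $t<T^*$, applies here to give uniqueness in law for the SDE starting at $(s,x)$.

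Next, having established existence and uniqueness in law of a weak solution for every initial pair $(s,x)$, I would invoke Theorem \ref{g:t:lm2ws} to translate this into well-posedness of the local martingale problem on $[s,T]\times \bfE$ for every $T<T^*$ for the generator
\[
A^h_t = A + \sum_{i,j=1}^{d} a_{ij}(x)\,\frac{\partial_{x_j} h(t,x)}{h(t,x)}\,\partial_{x_i}
\]
arising from the SDE coefficients. An application of Theorem 4.4.2 in \cite{EK} then yields the strong Markov property of the solution on each such interval, which, combined with the deterministic continuous extension to $T^*$ provided by the bridge condition, furnishes the strong Markov property on the whole time interval $[s,T^*]$.

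The main point requiring care is the blow-up of the drift as $t \uparrow T^*$: the term $a(x)(\nabla h(t,x))^{*}/h(t,x)$ is generally unbounded on $[s,T^*)$. This is handled exactly as in the preceding results of the paper: on each compact subinterval $[s,T] \subset [s,T^*)$ both $\nabla h$ and $1/h$ are locally bounded (since $h \in C^{1,2}([0,T^*)\times \bfE)$ and $h>0$ there), so classical well-posedness results apply on $[s,T]$, and one then passes to the limit $T \uparrow T^*$ using Theorem \ref{smb:th:Pextend} together with the bridge condition to patch the solutions into a single strong Markov process on $[s,T^*]$. No new arguments beyond those already deployed in the proofs of Theorem \ref{smb:th:MB} and Corollary \ref{smb:c:hpath} are required.
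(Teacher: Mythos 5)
Your proposal is correct and follows essentially the same route the paper takes: apply Theorem \ref{smb:th:MB} (with initial time $0$ replaced by $s$, which is a trivial change) to obtain existence and uniqueness in law for every $(s,x)$, then invoke Theorem \ref{g:t:lm2ws} to pass to well-posedness of the associated local martingale problem, and conclude the strong Markov property via Theorem 4.4.2 of \cite{EK}. The paper's own justification is only a one-sentence remark preceding the corollary, and your write-up merely fleshes out the same chain of reasoning.
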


To obtain strong solutions to (\ref{smb:e:sdeB})  we need to impose stronger conditions on the transition density $p$ and the coefficients $a$ and $b$. These conditions will imply the pathwise uniqueness which in turn will lead to the existence of a strong solution to (\ref{smb:e:sdeB}) in view of Yamada-Watanabe Theorem. The strict positivity of $p$ in $\mbox{int}(\bfE)$, which we will require, is not too restrictive and it is always satisfied in the case of one-dimensional diffusions (see \cite{McKean56}). On the other hand, the condition $p(t,y,y')>0$ for all $t>0, y \in \mbox{int}(\bfE)$ and $y' \in \partial(\bfE)$ is more delicate and its fulfilment depends on the classification of the boundaries of the underlying diffusion. 
\begin{theorem} \label{smb:th:sunique}  Let $\sigma$ be a matrix field such that $\sigma \sigma^*=a$ and fix $x \in \mbox{int}(\bfE)$ and $z \in \bfE$ such that the hypotheses of Theorem \ref{smb:th:MB} hold. Suppose, in addition,   that for any closed set $C \subset \mbox{int}(\bfE)$ 
\[
\|b(y)-b(y')\|+\|\sigma(y)-\sigma(y')\|\leq K_C \|y-y'\|,  \, y, y' \in C,
\]
as well as  $p(t,y,z)>0$ for all $t \in (0,T^*]$ and $y \in \mbox{int}(\bfE)$. If  $P^x(\inf\{t>0: \tilde{X}_t \notin  \mbox{int}(\bfE) \} <T^*)=0$ when  $\tilde{X}$ satisfies  
\[
\tilde{X}_t=x+\int_0^t b(\tilde{X}_u)du+\int_0^t \sigma(\tilde{X}_u)\,dB_u,
\]
then there exists a unique strong solution, $X$, to (\ref{smb:e:sdeB}). Moreover, $X_{T^*}=z$. 
\end{theorem}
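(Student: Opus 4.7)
The plan is to apply the Yamada--Watanabe theorem. Theorem \ref{smb:th:MB} already provides a weak solution $P^{x\rightarrow z}_{0\rightarrow T^*}$ of (\ref{smb:e:sdeB}) enjoying the bridge property $X_{T^*}=z$, so existence of a strong solution and uniqueness in law will both follow as soon as pathwise uniqueness is established for (\ref{smb:e:sdeB}) on $[0,T^*]$.

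The main obstacle is that the drift $b(y)+a(y)(\nabla h(u,y))^{*}/h(u,y)$ is not globally Lipschitz, since $h$ may vanish near $\partial\bfE$ and necessarily blows up as $u\uparrow T^*$. I would deal with this by a standard localisation. Choose an increasing family $(K_n)$ of compact subsets of $\mathrm{int}(\bfE)$ with $\bigcup_n K_n=\mathrm{int}(\bfE)$ and a sequence $T_n\uparrow T^*$. Let $X^1,X^2$ be two solutions of (\ref{smb:e:sdeB}) on a common filtered probability space, driven by the same Brownian motion $B$ and both started at $x$, and set
\[
\tau_n=\inf\{t\geq 0:X^1_t\notin K_n\text{ or }X^2_t\notin K_n\}\wedge T_n.
\]
Because $h\in C^{1,2}([0,T^*),\mathrm{int}(\bfE))$ is strictly positive there (we use $p(t,y,z)>0$ for $y\in\mathrm{int}(\bfE)$ and $t\in(0,T^*]$), the map $(u,y)\mapsto a(y)(\nabla h(u,y))^{*}/h(u,y)$ is Lipschitz in $y$, uniformly on $[0,T_n]\times K_n$. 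Combined with the local Lipschitz hypothesis on $b$ and $\sigma$, the stopped coefficients of (\ref{smb:e:sdeB}) are Lipschitz, so a routine Gronwall estimate on $\bbE\|X^1_{t\wedge\tau_n}-X^2_{t\wedge\tau_n}\|^2$ yields $X^1_{\cdot\wedge\tau_n}=X^2_{\cdot\wedge\tau_n}$ almost surely.

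The crux of the argument is therefore to show $\tau_n\uparrow T^*$ almost surely. Two facts suffice: (i) $X$ remains in $\mathrm{int}(\bfE)$ throughout $[0,T^*)$, and (ii) $\inf_{u\in[0,T]}h(u,X_u)>0$ for every $T<T^*$. Property (ii) is part of the conclusion of Theorem \ref{smb:th:MB}. For (i), the construction of $P^{x\rightarrow z}_{0\rightarrow T^*}$ in Theorem \ref{smb:th:MB} furnishes local absolute continuity with respect to the law $P^x$ of the base diffusion $\tilde X$: for every $T<T^*$ and every $E\in\cB_T$,
\[
P^{x\rightarrow z}_{0\rightarrow T^*}(E)=\bbE^x\!\left[\chf_E\,\frac{h(T,X_T)}{h(0,x)}\right].
\]
Hence any event in $\cB_T$ of $P^x$-probability zero has $P^{x\rightarrow z}_{0\rightarrow T^*}$-probability zero, and the hypothesis $P^x(\inf\{t>0:\tilde X_t\notin\mathrm{int}(\bfE)\}<T^*)=0$ transfers verbatim to $X$ under $P^{x\rightarrow z}_{0\rightarrow T^*}$. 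Once $\tau_n\uparrow T^*$ is secured, the Gronwall step forces $X^1\equiv X^2$ on $[0,T^*)$; Yamada--Watanabe then produces a unique strong solution whose law is $P^{x\rightarrow z}_{0\rightarrow T^*}$, and $X_{T^*}=z$ follows from the bridge property of Theorem \ref{smb:th:MB}.
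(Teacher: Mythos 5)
Your localisation and Gronwall step are exactly what the paper does (cf.\ its use of Theorem 5.3.7 of \cite{EK}), and you are right that the whole matter reduces to showing $\tau_n\uparrow T^*$. The gap is in that last step. Pathwise uniqueness must be proved for \emph{arbitrary} pairs of strong solutions $X^1,X^2$ on a common space; a priori nothing tells you that the law of an arbitrary strong solution of (\ref{smb:e:sdeB}) is $P^{x\rightarrow z}_{0\rightarrow T^*}$. Yet both of your facts --- that $\inf_{u\leq T}h(u,X_u)>0$ and that $X$ stays in $\mathrm{int}(\bfE)$ --- are properties you read off from $P^{x\rightarrow z}_{0\rightarrow T^*}$ and its local absolute continuity with respect to $P^x$. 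Invoking them for $X^1$ and $X^2$ implicitly assumes uniqueness in law, which is circular here: the weak uniqueness asserted in Theorem~\ref{smb:th:MB} is only proved under the stronger hypothesis that $h(t,\cdot)>0$ on all of $\bfE$, whereas Theorem~\ref{smb:th:sunique} assumes positivity of $p(t,\cdot,z)$ only on $\mathrm{int}(\bfE)$, so you cannot appeal to it in general.

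The paper avoids this by running a Girsanov argument directly on the arbitrary solution. With $\tau_n$ an exit time from $U_n\subset\mathrm{int}(\bfE)$ and $\nu_m=\inf\{t:h(t,X_t)\leq 1/m\}$, it defines $Q^{n,m}$ via the density $h(0,x)/h\bigl(T\wedge\nu_m\wedge\tau_n,X_{T\wedge\nu_m\wedge\tau_n}\bigr)$, which is equivalent to $P$ because the stopping times keep $h$ bounded away from $0$ and the coefficients locally Lipschitz. Under $Q^{n,m}$ the drift correction disappears and $X$ coincides, up to the stopping time, with the base diffusion $\tilde X$ whose law $P^x$ is known. Transporting the event $\{\tau_n\wedge\nu_m<T\}$ through this identity and using the martingale property of $h(t,\tilde X_t)$ under $P^x$ yields $P(\tau_n\wedge\nu_m<T)\leq E^x\bigl[\tfrac{h(T,\tilde X_T)}{h(0,x)}(\chf_{[\tilde\tau_n<T]}+\chf_{[\tilde\nu_m<T]})\bigr]$, which goes to $0$ because $\tilde X$ stays in $\mathrm{int}(\bfE)$ by hypothesis and $h>0$ there. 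This is the piece your argument is missing: it transfers the interior-confinement and positivity of $h$ from the base diffusion to an arbitrary strong solution of (\ref{smb:e:sdeB}) without presupposing the solution's law.
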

\begin{proof}
Let $X$ be a strong solution to (\ref{smb:e:sdeB}) on $(\Om, \cF, (\cF_t)_{t \in [0,T^*]}, P)$ and $T<T^*$.  Due to the form of $\bfE$ there exists an increasing sequence of open sets with compact closure, $U_n$, such that $\mbox{cl}(U_n) \subset \mbox{int}(\bfE)$ and $\mbox{int}(\bfE)=\cup_{n=1}^{\infty}U_n$.  It is a simple matter to check that the coefficients of (\ref{smb:e:sdeB}) satisfy the conditions of Theorem 5.3.7 of \cite{EK} on $[0,T]\times U_{n}$ for all $n \geq 1$. Thus, the conclusion of the theorem yields uniqueness upto $\tau_{n}:= T \wedge \inf\{t>0:X_t \notin U_{n}\}$. By taking limits first as  $n\rar \infty$ we obtain uniqueness until $T \wedge \tau$, where $\tau=\inf\{t>0: X_t \notin  \mbox{int}(\bfE) \}$. 

Let $\nu_m=\inf\{t\geq 0: h(t,X_t)\leq \frac{1}{m}\}$ and consider a measure $Q^{n,m}$ defined by 
\[
Q^{n,m}(E)= E\left[\frac{h(0,x)}{h\left(T \wedge \nu_m \wedge \tau_n, X_{T \wedge \nu_m \wedge \tau_n}\right)}\chf_E\right], \qquad E \in \cF_T.
\]
Due to the choice of stopping times $Q^{n,m}\sim P$. Moreover, 
\[
W_t = B_t +\int_0^{t \wedge \tau_n \nu_m }\sigma^*(X_u)\frac{(\nabla h(u,X_u))^*}{h(u,X_u)}du 
\]
is a $Q^{n,m}$-Brownian motion. Let $\tilde{X}$ be the unique strong solution of 
\be \label{smb:e:tildex}
\tilde{X}_t=x+\int_0^t b(\tilde{X}_u)du+\int_0^t \sigma(\tilde{X}_u)\,dW_u.
\ee
 Observe that  $X$ solves (\ref{smb:e:tildex}) under $Q^{n,m}$ until $T \wedge \nu_m \wedge \tau_n$, and thus $Q^{n,m}(X_{t \wedge \nu_m \wedge \tau_n}=\tilde{X}_{t \wedge \tilde{\nu}_m \wedge \tilde{\tau}_n}, t \in [0,T])=1$ by Theorem 5.3.7 of \cite{EK}, where $\tilde{\tau}_{n}:= T \wedge \inf\{t>0:\tilde{X}_t \notin U_{n}\}$, $\tilde{\nu}_m=\inf\{t\geq 0: h(t,\tilde{X}_t)\leq \frac{1}{m}\}$. Hence,
\bean
P(\tau_n \wedge \nu_m <T)&=&E^{Q^{n,m}}\left[\frac{h(T\wedge \tau_n \wedge \nu_m, X_{T \wedge \nu_m \wedge \tau_n})}{h(0,x)}\chf_{[\tau_n \wedge \nu_m <T]}\right]\\
&=&E^{Q^{n,m}}\left[\frac{h(T\wedge \tilde{\tau}_n \wedge \tilde{\nu}_m, \tilde{X}_{T \wedge \tilde{\nu}_m \wedge \tilde{\tau}_n})}{h(0,x)}\chf_{[\tilde{\tau}_n \wedge \tilde{\nu}_m <T]}\right]\\
&=&E^x\left[\frac{h(T\wedge \tilde{\tau}_n \wedge \tilde{\nu}_m, \tilde{X}_{T \wedge \tilde{\nu}_m \wedge \tilde{\tau}_n})}{h(0,x)}\chf_{[\tilde{\tau}_n \wedge \tilde{\nu}_m <T]}\right]\\
&=&E^x\left[\frac{h(T, \tilde{X}_{T })}{h(0,x)}\chf_{[\tilde{\tau}_n \wedge \tilde{\nu}_m <T]}\right]\\
&\leq & E^x\left[\frac{h(T, \tilde{X}_{T })}{h(0,x)}\left(\chf_{[\tilde{\tau}_n <T]} +\chf_{[\tilde{\nu}_m <T]}\right)\right],
\eean
where $P^x$ is the law of $\tilde{X}$. Note that the fourth equality is due to the fact that $(h(t,\tilde{X}_t))_{t \in [0,T]}$ is a martingale under $P^x$ as a consequence of Chapman-Kolmogorov equation. 

Next, observe that $ \lim_{m\rar \infty}\nu_m \geq \tau,\, P$-a.s. due to the strict positivity of $h$ on $\mbox{int}(\bfE)$. Similarly, $\lim_{m\rar \infty}\tilde{\nu}_m \geq \lim_{n \rar \infty} \tilde{\tau}_n,\, P^x$-a.s.. Furthermore, $\lim_{n \rar \infty} \tilde{\tau}_n>T,\, P^x$-a.s. as $\tilde{X}$ stays in the interior under $P^x$. Hence, taking the limits in the above yields $P(\tau<T)=0$ and, thus establishes the pathwise uniqueness upto $T$.  This together with the existence of a weak solution due to Theorem \ref{smb:th:MB} implies the existence of a unique strong solution via Corollary 5.3.23 in \cite{KS} on $[0,T^*)$ as $T$ was arbitrary.

Finally, since pathwise uniqueness implies uniqueness in law  we conclude that the law of $(X_t)_{t \in [0,T*)}$ is given by $P^{x \rar z}_{0 \rar T^*}$ obtained in Theorem \ref{smb:th:MB} because of the continuity of the weak solution. Thus, we can uniquely define $X_{T^*}=\lim_{t \rar T^*}X_t=z$.  
\end{proof}
\begin{remark} Using the arguments above we can also establish the the existence and uniqueness of a strong solution of the SDE in Theorem \ref{c:hsde} under the assumption that $b$ and $\sigma$ are locally Lipschitz. 
\end{remark}
We end this section with a result that shows that in the one-dimensional case the hypotheses of Assumption \ref{smb:a:potden} are not too restrictive.
\begin{proposition} \label{smb:l:ubdd} Suppose that Assumption \ref{smb:a:A} and the conditions of Example \ref{ex:1dd} are satisfied. Then, the conditions (\ref{smb:e:dualinf})-(\ref{smb:e:bddpotential}) hold.
\end{proposition}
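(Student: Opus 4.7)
Under the hypotheses of Example \ref{ex:1dd} the diffusion $X$ has a symmetric transition density $p(t,x,y)=p(t,y,x)$ with respect to the speed measure $m$, jointly continuous on $(0,\infty)\times(l,\infty)^{2}$, so (\ref{smb:e:C-Kinf}) is simply the semigroup property $P_{t+s}=P_{t}P_{s}$ written for densities. For (\ref{smb:e:dualinf}) I combine symmetry with Chapman--Kolmogorov to write
\[
\int_{\bfE} p(t,z,y)\,p(u-t,x,y)\,m(dy)=\int_{\bfE}p(t,z,y)\,p(u-t,y,x)\,m(dy)=p(u,x,z),
\]
so the integral over $B_{r}^{c}(z)$ equals $p(u,x,z)-\int_{B_{r}(z)} p(t,z,y)\,p(u-t,x,y)\,m(dy)$. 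The measure $p(t,z,\cdot)m(\cdot)$ is the $P^{z}$-law of $X_{t}$ and converges weakly to $\eps_{z}$ by path continuity; joint continuity of $p$ on $(0,\infty)\times(l,\infty)^{2}$ provides local boundedness of $y\mapsto p(u-t,x,y)$ on a neighbourhood of $z$ uniformly for $t$ in a small right-neighbourhood of $0$, so the $B_{r}(z)$-part tends to $p(u,x,z)$ and the complement integral vanishes.

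For (\ref{smb:e:bddpotential}) I invoke the It\^{o}--McKean representation of the $\alpha$-potential density of a one-dimensional regular diffusion: letting $\psi_{\alpha}$ (resp.\ $\phi_{\alpha}$) denote the positive, increasing (resp.\ decreasing) solution of $(A-\alpha)u=0$ on $(l,\infty)$ determined by the boundary behaviour and $w_{\alpha}>0$ their scale-weighted Wronskian,
\[
u^{\alpha}(x,y)= w_{\alpha}^{-1}\,\psi_{\alpha}(x\wedge y)\,\phi_{\alpha}(x\vee y).
\]
Monotonicity of $\psi_{\alpha}$ and $\phi_{\alpha}$ therefore makes $x\mapsto u^{\alpha}(x,y)$ increasing on $(l,y]$ and decreasing on $[y,\infty)$. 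Hence, for any compact $K\subset\bfE$ with $y\notin K$,
\[
\sup_{x\in K}u^{\alpha}(x,y)=\max\{u^{\alpha}(a,y),\,u^{\alpha}(b,y)\},\qquad a:=\sup\{x\in K:x<y\},\ b:=\inf\{x\in K:x>y\},
\]
with the convention that only the existing side contributes. This reduces (\ref{smb:e:bddpotential}) to proving $\sup_{\alpha>0}\alpha u^{\alpha}(a,y)<\infty$ for a single point $a\neq y$.

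For this reduced problem I substitute $s=\alpha t$ to obtain
\[
\alpha u^{\alpha}(a,y)=\int_{0}^{\infty} e^{-s}\,p(s/\alpha,a,y)\,ds\;\leq\;\sup_{t>0}p(t,a,y),
\]
so everything comes down to showing $M:=\sup_{t>0}p(t,a,y)<\infty$. Continuity of $t\mapsto p(t,a,y)$ on $(0,\infty)$ is part of Example \ref{ex:1dd}; boundedness as $t\to\infty$ is standard for one-dimensional diffusions with natural or entrance boundaries (the density tends to $0$ in the null-recurrent/transient case and to the bounded invariant density in the positive-recurrent case). The main obstacle is the small-time statement $p(t,a,y)\to 0$ as $t\downarrow 0$ for $a\neq y$. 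I plan to derive this from the weak convergence $p(t,a,\cdot)m(\cdot)\Rightarrow\eps_{a}$ (which follows from path continuity of $X$ under $P^{a}$) together with McKean's pointwise regularity of $p$ as a function of its spatial variables: the concentration of the density near $a$ combined with continuity in the second variable prevents $p(t,a,y)$ from staying positive at any point $y$ bounded away from $a$. Once $M<\infty$ is granted, the inequality above delivers $\alpha u^{\alpha}(a,y)\leq M$ uniformly in $\alpha>0$, completing the proposition.
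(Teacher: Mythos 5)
Your treatment of (\ref{smb:e:C-Kinf}) and (\ref{smb:e:dualinf}) is essentially the paper's: cite McKean for symmetry and continuity of $p$, and combine Chapman--Kolmogorov, symmetry and the concentration of $p(t,z,\cdot)m$ near $z$.

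For (\ref{smb:e:bddpotential}), however, you take a genuinely different route, and it has a real gap. Your reduction to two test points $a,b$ via the It\^o--McKean formula $u^{\alpha}(x,y)=w_{\alpha}^{-1}\psi_{\alpha}(x\wedge y)\phi_{\alpha}(x\vee y)$ and the resulting monotonicity of $x\mapsto u^{\alpha}(x,y)$ is correct, and plays the role that the strong Markov identity $u^{\alpha}(x,z)=E^{x}[e^{-\alpha\tau_{2}}]u^{\alpha}(X_{\tau_{2}},z)$ plays in the paper (both exploit the order structure of the line). But then you substitute $s=\alpha t$ and reduce everything to proving $M:=\sup_{t>0}p(t,a,y)<\infty$ for a single $a\neq y$, and this is where the argument breaks. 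Your justification for $\lim_{t\downarrow 0}p(t,a,y)=0$ is: weak convergence $p(t,a,\cdot)m\Rightarrow\eps_{a}$ combined with ``continuity in the second variable''. That does not give pointwise decay of the density away from $a$. Weak convergence only controls integrals against bounded continuous functions; it is perfectly compatible with $p(t,a,y)$ remaining bounded away from $0$ for a fixed $y$, provided the mass near $y$ is supported in an interval shrinking as $t\to 0$. Continuity in $y$ at each fixed $t$ does not rule this out, since the width of the potential spike can vary with $t$. The claim $p(t,a,y)\to 0$ is in fact true for regular one-dimensional diffusions, but proving it requires substantially more (e.g.\ the large-$\alpha$ asymptotics of $\psi_{\alpha},\phi_{\alpha}$ and a Tauberian argument, or a detailed spectral representation), none of which is supplied or cited. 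Notice also that your target $M<\infty$, combined with the monotonicity step, would in fact yield the \emph{stronger} condition (\ref{smb:e:tdbound}) uniformly in $t>0$; if that were as easy to obtain as you suggest, Assumption \ref{smb:a:potden} and this entire proposition would be redundant in the one-dimensional case, which should have been a warning sign.

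The paper circumvents this pointwise heat-kernel question entirely. It never bounds $p(t,\cdot,y)$ in $t$; instead it works directly with $\alpha u^{\alpha}$. Using $\alpha\int u^{\alpha}(x,\cdot)f\,dm\to f(x)$ (path continuity), Fatou, and the strong Markov domination $u^{\alpha}(x,z)\leq u^{\alpha}(X_{\tau_{2}},z)$, it first obtains $\liminf_{n}\alpha_{n}u^{\alpha_{n}}(x_{n},z)=0$ for a.e.\ $z$ in a neighbourhood not containing $x$. Then, assuming $\alpha_{n}u^{\alpha_{n}}(x_{n},y)\to\infty$ for some $y\notin K$, it picks such a $z$ with $P^{y}(T_{z}<T_{x})=1$ and compares the two hitting-time Laplace transforms via $u^{\alpha}(x,y)/u^{\alpha}(x,z)=E^{y}[e^{-\alpha T_{x}}]/E^{z}[e^{-\alpha T_{x}}]=E^{y}[e^{-\alpha T_{z}}]<1$, producing a contradiction. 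This argument only requires the boundedness asserted in (\ref{smb:e:bddpotential}) and never touches small-time density asymptotics. To repair your proof you would either need to supply a genuine reference/derivation for $\sup_{t>0}p(t,a,y)<\infty$, or abandon the density route in favour of an argument at the level of the resolvent, as the paper does.
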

\begin{proof} As the endpoints of $\bfE$ are inaccessible, the semi-group $P_t$ possesses a density $p(t,x,y)$ satisfying (\ref{smb:e:C-Kinf}) with respect to the speed measure, $m$, of the diffusion. Moreover, $p(t,x,y)=p(t,y,x)$ for all $t>0$ and $(x,y)\in (l,\infty)^2$ and is continuous on $(0,\infty)\times (l,\infty)\times (l,\infty)$ (see \cite{McKean56}). This, together with the continuity of $X$ yields (\ref{smb:e:dualinf}). 

Moreover, $u^{\alpha}(x,y)$ is symmetric for each $\alpha>0$ and is continuous on $(l,\infty)\times \bfE$ due to its construction and boundary behaviour (see (2.3) and Table 1 in \cite{McKean56}). 

It remains to show (\ref{smb:e:bddpotential}). If $y\in \bfE$ is a natural boundary, $u^{\alpha}(x,y)=0$ for all $\alpha>0$ and $x \in \bfE \backslash \{y\}$.  So, assume $y$ is not a natural boundary. 

First, note that for any continuous and bounded $f$, 
\be \label{smb:e:potlimitf}
\alpha \int_{\bfE} u^{\alpha}(x,z)f(z)m(dz)=\alpha\int_0^{\infty} e^{-\alpha t} E^xf(X_t)dt \rar f(x),
\ee
as $\alpha \rar \infty$ by the continuity of $X$.

 Next, let $K_1$ and $K_2$ be two disjoint closed and bounded intervals contained in $(l, \infty)$. It follows (see, e.g., Theorem 3.6.5  in \cite{MarRos}) from the strong Markov property of $X$ that for $x \in K_1$ and $z \in K_2$ that 
\[
u^{\alpha}(x,z)=E^x\left[e^{-\alpha \tau_2}\right]u^{\alpha}(X_{\tau_2},z),
\]
where $\tau_2=\inf\{t>0: X_t\in K_2\}$ and $X_{\tau_2}$ is deterministic and equals either the left or the right endpoint of $K_2$ depending on whether $x < z$ or not. Note that $X_{\tau_2}$ has the same value for all $x \in K_1$, and consequently
\[
u^{\alpha}(x,z) \leq u^{\alpha}(X_{\tau_2},z), \forall x\in K_1.
\]
Let $K_3$ be an arbitrary closed interval strictly contained in $K_2$. For any $f$ with a support in $K_3$, we have
\[
\sup_{x \in K_1}\int_0^{\infty} \alpha u^{\alpha}(x,z)f(z)m(dz)\leq \int_0^{\infty} \alpha u^{\alpha}(X_{\tau_2},z)f(z)m(dz).
\]
This implies in view of Fatou's lemma and (\ref{smb:e:potlimitf}) that whenever $x_n \rar x \in K_1$ and $\alpha_n \rar \infty$, we have
\be \label{smb:e:ualphlim}
\liminf_{n \rar \infty} \alpha_n u^{\alpha_n}(x_n,z)=0,
\ee
for $m$-a.a. $z$ in $K_2$. Since $m$ is equivalent to the Lebesgue measure in $(l, \infty)$, we have that the above holds for a.a. $z$ in $K_2$. 

 Now, suppose $K$ is a compact set that does not contain $y$ and there exists a convergent sequence $(x_n)$ in $K$ and a sequence of positive real numbers $(\alpha_n)$ such that $\lim_{n \rar \infty} \alpha_n =\infty$ as well as
 \be \label{smb:e:ubddpot}
 \lim_{n \rar \infty} \alpha_n u^{\alpha_n}(x_n,y)= \infty.
 \ee 
 
 In view of (\ref{smb:e:ualphlim}), there exists a $z \notin K\cup\{y\}$ such that $P^y(T_z <T_x)=1$ for all $x \in K$, and
\be \label{smb:e:ualphlim2}
 \liminf_{n \rar \infty} \alpha_n u^{\alpha_n}(x_n,z)=0.
 \ee
 On the other hand, using Theorem 3.6.5  in \cite{MarRos} once more and the fact that $u^{\alpha}$ is symmetric, we may write
 \[
 \limsup_{n \rar \infty} \frac{E^y\left[e^{-\alpha_n T_{x_n}}\right]}{E^z\left[e^{-\alpha_n T_{x_n}}\right]}=\limsup_{n \rar \infty}\frac{u^{\alpha_n}(x_n,y)}{u^{\alpha_n}(x_n,z)}=\infty,
 \]
where the value of the limit  follows from (\ref{smb:e:ubddpot}) and (\ref{smb:e:ualphlim2}). However, 
 \[
 \frac{E^y\left[e^{-\alpha_n T_{x_n}}\right]}{E^z\left[e^{-\alpha_n T_{x_n}}\right]}= E^y\left[e^{-\alpha_n T_{z}}\right]< 1
 \]
by the strong Markov property of $X$ since $P^y(T_z<T_{x_n})=1$.
\end{proof}

\bibliographystyle{siam}
\bibliography{ref}
\appendix
\section{Weak solutions and the local martingale problem} \label{s:app}
We will demonstrate in this section that under our standing assumptions the solution to the local martingale problem for $A$ defined in (\ref{e:At}) is equivalent to the weak solution of the following SDE:
\be \label{ch3_sde}
dX_t= b(t,X_t)\,dt + \sigma(t,X_t)dW_t
\ee
where $W$ is an $d$-dimensional Brownian motion and 
$\sigma$ is a $d\times d$ dispersion matrix such that
\[
a_{ij}(t,x)=\sum_{k=1}^r\sigma_{ik}(t,x)\sigma_{kj}(t,x).
\]
We will work under our standing assumption that $a$ and $b$ satisfy the conditions (1) and (2) of Assumption \ref{smb:a:A}. The connection between the weak solutions of  SDEs and the associated local martingale problems is well-known. Although it is usually observed under the slightly stronger condition that  $b$ is locally bounded, the proofs remain valid under our assumptions. Thus, we only give the statement of the theorem and the references for its proof.    

\begin{definition} A weak solution starting from $s$ of (\ref{ch3_sde}) is a triple
  $(X,W),\, (\Om,\cF, P),\, (\cF_t)_{t \geq 0}$, where
\begin{itemize}
\item[i)] $(\Om, \cF, P)$ is a probability space, $(\cF_t)$ is a
  filtration of sub-$\sigma$-fields of $\cF$, and $X$ is a process on $(\Om, \cF, P)$ with sample paths in $C([0,\infty),\bfE)$ such that $P(X_r=X_s, r \leq s)=1$;
\item[ii)] $W$ is $d$-dimensional  $(\cF_t)$-Brownian motion and $X=(X_t)$ is adapted to $(\bar{\cF}_t)$ where $\bar{\cF}_t$ is the completion of $\cF_t$ with $P$-null sets;
\item[iii)] $X$ and $W$ are such that
 \[
X_t=X_s+\int_s^{t}b(u,X_u)\,du + \int_s^t \sigma(u,X_u)\,dW_u, \, P\mbox{-a.s.}, \forall t\geq s.
\]
\end{itemize}
\end{definition}
The probability measure $\mu$ on $\sE$ defined by
$\mu(\Lambda)=P(X_s \in \Lambda)$ is called the {\em initial
  distribution} of the solution. 
  
 Observe that for any continuous process $Y$, the integrals $\int_s^t b(u,Y_u)\,du$ and $\int_s^t a(u,Y_u)\,du$ are well-defined  until $\tau_n=\inf\{t\geq s: |Y_t|\geq n\}$ for any $n \geq 1$. Due to the continuity of $Y$,  $\tau_n \rar \infty$  implying 
\be \label{smb:e:a:QV}
P\left(\int_s^t a(u,Y_u)\,du <\infty\right)=1.
\ee
Therefore, $\left(\int_s^t \sigma(u,Y_u)\,dW_u\right)_{t\geq s}$ is a well-defined continuous local martingale.  If, additionally, $Y$ is a weak solution of (\ref{ch3_sde}), it follows that
\[
P\left(\left|\int_s^t b(u,Y_u)\,du\right| <\infty\right)=1.
\]
Using the fact that $b$ is locally bounded from above or below, one can show that the above implies
\[
P\left(\int_s^t \left|b(u,Y_u)\right| \,du<\infty\right)=1, \qquad t \geq s,
\]
following the reasoning that led to  (\ref{smb:e:a:QV}). Thus, any weak solution is a semimartingale.

Equivalence of local martingale problem and weak solutions is summarised in the following theorem. Its  proof follows the lines that led to  Corollary 5.3.4 in \cite{EK}. Note that although  Corollary 5.3.4 in \cite{EK} assumes $a$ and $b$ are locally bounded, the proof therein applies under our assumptions as well.
\begin{theorem} \label{g:t:lm2ws} For any fixed $s \geq 0$ the existence of a solution $P^{s,\mu}$ to the local
  martingale problem for $(A, \mu)$ starting from $s$ is equivalent to the existence of a weak solution
  $(X,W),\, (\hat{\Om},\hat{\cF}, \hat{P}),\, (\hat{\cF}_t)$ to
  (\ref{ch3_sde}) starting from $s$ such that $\hat{P}(X_s\in \Lambda)=\mu(\Lambda)$ for any $\Lambda \in \sE$. The two solutions are related by $P^{s,\mu}=\hat{P}X^{-1}$;
  i.e. $X$ induces the measure $P^{s,\mu}$ on $(C(\bbR_+,\bfE),\sB)$. 
  
  Moreover, $P^{s,\mu}$ is unique if and only if the uniqueness in the sense of probability law holds for the solutions of  (\ref{ch3_sde}) starting from $s$ with the initial distribution $\mu$.
\end{theorem}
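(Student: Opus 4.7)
The plan is to establish the two directions of the equivalence separately and then read off the uniqueness statement from the correspondence $P^{s,\mu}=\hat P X^{-1}$. Throughout I would exploit the continuity of paths to localise with stopping times $\tau_n=\inf\{t\geq s:\|X_t\|\geq n\}$, on which $a$ and $b$ are bounded thanks to the local boundedness part of Assumption \ref{smb:a:A}.

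\textbf{Weak solution $\Rightarrow$ local martingale problem.} Given a weak solution $(X,W)$ on $(\hat\Omega,\hat\cF,\hat P)$, I would apply It\^o's formula to $f(X_t)$ for $f\in C^{\infty}(\bfE)$ and check that
\[
M^f_t=f(X_t)-f(X_s)-\int_s^t A_rf(X_r)\,dr=\sum_{i,k}\int_s^t \frac{\partial f}{\partial x_i}(X_r)\sigma_{ik}(r,X_r)\,dW^k_r.
\]
Stopping at $\tau_n$ makes the integrand bounded, so $M^f_{\cdot\wedge\tau_n}$ is a genuine martingale; continuity of $X$ gives $\tau_n\uparrow\infty$, hence $M^f$ is a local martingale. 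Pushing forward to $(C(\bbR_+,\bfE),\sB)$ via the coordinate map shows $P:=\hat P X^{-1}$ solves the local martingale problem for $(A,\mu)$ starting from $s$.

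\textbf{Local martingale problem $\Rightarrow$ weak solution.} This is the substantive direction. Working under $P^{s,\mu}$ on the canonical space, I would first choose $\phi_n\in C^{\infty}_c(\bfE)$ equal to $x_i$ on $\{\|x\|\leq n\}$. Applying the martingale property to $\phi_n$ and to $\phi_n\phi_m$ and then letting $n,m\to\infty$ (again using continuity of paths to remove the cutoff up to $\tau_n$), one obtains that
\[
M^i_t:=X^i_t-X^i_s-\int_s^t b_i(u,X_u)\,du
\]
is a continuous local martingale with $\langle M^i,M^j\rangle_t=\int_s^t a_{ij}(u,X_u)\,du$. This step is where the hypothesis that all $b_i$ are locally bounded from one side (above or below, uniformly in $i$) is essential: it lets one argue, as in the discussion after \eqref{smb:e:a:QV} in the excerpt, that the drift integral is actually absolutely integrable on $[s,t]$ a.s., so that $M^i$ is well defined. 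Next, because $a=\sigma\sigma^{*}$ is merely symmetric and nonnegative (possibly degenerate), I would invoke the standard martingale representation result (Theorem 3.4.2 in [KS] or Proposition II.7.1 in Ikeda--Watanabe) which, possibly after enlarging the probability space with an independent $d$-dimensional Brownian motion $\tilde W$, produces a Brownian motion $W$ such that $M^i_t=\sum_k\int_s^t\sigma_{ik}(u,X_u)\,dW^k_u$. The enlarged triple is then a weak solution of \eqref{ch3_sde} with initial distribution $\mu$. The identity $P^{s,\mu}=\hat P X^{-1}$ is automatic from the construction.

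\textbf{Uniqueness.} Finally, the correspondence $P^{s,\mu}\leftrightarrow\hat P X^{-1}$ is a bijection between solutions of the martingale problem and laws of weak solutions. Hence uniqueness of $P^{s,\mu}$ is equivalent to uniqueness in law for \eqref{ch3_sde} with initial distribution $\mu$: two weak solutions with differing laws push forward to distinct martingale-problem solutions, and conversely two martingale-problem solutions give rise, by the previous paragraph, to weak solutions whose laws must coincide with those prescribed measures.

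\textbf{Main obstacle.} The only genuinely delicate point is the combination of (i) unbounded, possibly degenerate $a$ and (ii) a drift $b$ that is only one-sidedly locally bounded. The degeneracy of $a$ forces the use of a space enlargement to manufacture the driving Brownian motion, while the one-sided bound on $b$ has to be used in the right order to upgrade local integrability of $\int b(u,X_u)\,du$ to absolute integrability before identifying the drift. Beyond these points, everything reduces to a careful localisation following Corollary 5.3.4 in [EK].
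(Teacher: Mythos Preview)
Your proposal is correct and follows exactly the route the paper takes: the paper does not give an independent argument but simply refers to Corollary 5.3.4 in \cite{EK}, noting that its proof goes through verbatim under the weaker one-sided local boundedness of $b$. You have spelled out precisely that standard proof, including the localisation via $\tau_n$, the It\^o-formula direction, the identification of $M^i$ and its bracket, the representation of the continuous local martingale via a (possibly enlarged) Brownian motion, and the law-level correspondence for uniqueness; this is the same approach, just made explicit.
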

\section{Some technical results}
\begin{lemma} \label{smb:l:potential} Fix $x,z \in \bfE$ and $t>0$ such that $p(t,x,z)>0$. Define $M_s=p(t-s, X_s, z)$ for $s<t$. Then, $(M_s)_{s \in [0,t)}$ is a $P^x$-martingale. Moreover, if $(M_s)_{s \in [0,t)}$ is \cadlag, $P^x$-a.s.\footnote{Note that we can in fact always choose a \cadlag version as soon as we augment the natural filtration of $X$ with the universal null sets since $X$ is strong Markov. For a proof of this result see Theorem 4 and its Corollary in Section 2.3 of \cite{CW} and observe that although the result therein is proved for Feller processes its proof only uses the strong Markov property of a Feller process.}, and if (\ref{smb:e:dual}) is satisfied, then $M_t:=\lim_{s \rar t} M_s=0,\, P^x$-a.s. and $(M_s)_{ s \in [0,t]}$ is a $P^x$-supermartingale.
\end{lemma}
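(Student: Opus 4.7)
The plan is to establish the martingale identity on $[0,t)$ by a direct Markov-plus-Chapman-Kolmogorov computation, then to pass to the boundary time $t$ via a Fatou-type argument that couples path continuity with the vanishing condition (\ref{smb:e:dual}).

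For the martingale property on $[0,t)$, fix $0\le r<s<t$. By the Markov property under $P^x$ together with the fact that $p(\cdot,\cdot,z)$ is a version of the transition density,
\[
E^x\bigl[p(t-s,X_s,z)\,\big|\,\cB_r\bigr]
= \int_{\bfE} p(s-r,X_r,y)\, p(t-s,y,z)\, m(dy),
\]
which by the Chapman--Kolmogorov identity (\ref{smb:e:C-K}) equals $p(t-r,X_r,z)=M_r$. This gives the martingale property and, in particular, $E^x[M_s]=p(t,x,z)$ for every $s<t$.

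For the limit at $t$, since $(M_s)_{s<t}$ is a non-negative \cadlag\ $P^x$-martingale, Doob's convergence theorem supplies the $P^x$-a.s.\ existence of $L:=\lim_{s\to t^-}M_s$. To see that $L=0$, fix $r>0$ and observe that by continuity of the paths of $X$, on the event $\{\|X_t-z\|>r\}$ we have $X_s\in B_{r/2}^{c}(z)$ for all $s$ sufficiently close to $t$, so
\[
\chf_{\{\|X_t-z\|>r\}}\le \liminf_{s\to t^-}\chf_{\{X_s\in B_{r/2}^c(z)\}}.
\]
Applying Fatou's lemma and then evaluating the expectation via the joint density,
\[
E^x\bigl[L\,\chf_{\{\|X_t-z\|>r\}}\bigr]
\le \liminf_{s\to t^-}E^x\bigl[M_s\chf_{\{X_s\in B_{r/2}^c(z)\}}\bigr]
= \liminf_{s\to t^-}\int_{B_{r/2}^c(z)} p(t-s,y,z)\,p(s,x,y)\,m(dy),
\]
and the right-hand side vanishes by (\ref{smb:e:dual}) taken with $u=t$ and the infinitesimal argument $t-s$. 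Sending $r\downarrow 0$ yields $L\,\chf_{\{X_t\ne z\}}=0$, $P^x$-a.s. Since $P^x(X_t=z)=p(t,x,z)\,m(\{z\})=0$ in the regime where the lemma is invoked (the case $m(\{z\})>0$ being separately handled, c.f.\ the remark following Theorem~\ref{smb:th:MB}), we conclude $L=0$, $P^x$-a.s.

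Finally, the supermartingale property on $[0,t]$ is automatic: on $[0,t)$ it is a martingale by the first step, and for any $u<t$, $E^x[M_t\,|\,\cB_u]=0\le M_u$ since $M_t=0$ a.s. The principal obstacle is proving $L=0$, because positive martingales do not in general converge to zero --- indeed, $p(t-s,X_s,z)$ blows up along any path for which $X_t=z$. What makes the argument go through is precisely that (\ref{smb:e:dual}) captures the fact that the martingale mass away from $z$ must vanish as $s\to t$, and continuity of $X$ then transfers this mass-concentration from the state variable to the terminal value, confining any non-zero limit to the (null) event $\{X_t=z\}$.
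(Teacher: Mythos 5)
Your proof is correct and follows essentially the same route as the paper's: both establish the martingale property via the Markov property and Chapman--Kolmogorov, invoke Doob's convergence theorem for the existence of the terminal limit, and then use Fatou's lemma together with (\ref{smb:e:dual}) and path continuity to kill the limit on the event $\{X_t\neq z\}$, concluding via $m(\{z\})=0$. The only cosmetic differences are your use of a continuous parameter $r\downarrow 0$ in place of the paper's sequence $B^c_{1/n}(z)$, and your slightly more explicit remark that $P^x(X_t=z)=p(t,x,z)\,m(\{z\})=0$ is what is implicitly used.
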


\begin{proof}
Observe that for any $0\leq s\leq u<t$ we have
\[
E^x[M_u|\cB_s]=\int_{\bfE}p(t-u, y, z)p(u-s, X_s, y)m(dy)=p(t-s,X_s,z)= M_s,
\]
and therefore $M$ is a martingale on $[0,t)$
and Theorem 1.3.15 in \cite{KS} yields that $M_t$ exists, non-negative, and $E^x[M_t]\leq E^x[M_0]$. 
Moreover, by Fatou's lemma we have 
\bean
E^x\left[M_t\chf_{B^c_{\frac{1}{n}}(z)}(X_t)\right]&\leq &E^x\left[\liminf_{u\rar t}M_u\chf_{B^c_{\frac{1}{n+1}}(z)}(X_u)\right]\leq\liminf_{u\rar t}E^x\left[M_u\chf_{B^c_{\frac{1}{n+1}}(z)}(X_u)\right]\\
&=&\liminf_{u\rar t}\int_{B^c_{\frac{1}{n+1}}(z)}p(t-u, y, z)p(u, x, y)m(dy)=0,
\eean
where the first inequality is due to the continuity of $X$, and the last equality is due to (\ref{smb:e:dual}). In view on non-negativity of $M_t\chf_{B^c_{\frac{1}{n}}(z)}(X_t)$, we have 
$$
 E^x\left[M_t\chf_{B^c_{\frac{1}{n}}(z)}(X_t)\right]=0.
$$
Monotone convergence theorem implies
$$
  E^x\left[M_t\right]=E^x\left[M_t\chf_{[X_t\neq z]}\right]=E^x\left[M_t\lim_{n\rar \infty}\chf_{B^c_{\frac{1}{n}}(z)}(X_t)\right]=\lim_{n\rar \infty} E^x\left[M_t\chf_{B^c_{\frac{1}{n}}(z)}(X_t)\right]=0,
$$
and since $M_t$ is non-negative, $M_t=0$, $P^x$-a.s..
\end{proof}
\begin{lemma}\label{smb:l:LT} Consider $\varphi : (0,\infty)\times (0,\infty) \mapsto [0,\infty) $. Then
\begin{itemize}
 \item[a.] If $\varphi(t,\cdot)$ is increasing and either 
    \begin{itemize}
      
      \item[(i)] 
      \[
        \lim_{\alpha \rar \infty} \alpha\int_0^{t} e^{-\alpha s }\varphi(t,s) ds =0,\,\forall t>0
      \]
       or 
       \item[(ii)] 
      \[
         \lim_{\alpha \rar \infty} \alpha\int_0^{\infty} \int_0^t e^{-\alpha s -\beta t}\varphi(t,s) dsdt =0,\, \forall \beta >0, 
      \]
      
      \end{itemize}
   then $\varphi(t,0):=\lim_{\delta\rar 0}\varphi(t,\delta)=0$ for almost every $t>0$.
 \item[b.] If there exists a constant $K$ such that $\varphi(t,\delta)<k$ for all $\leq\delta\leq t$, and $\lim_{\delta\rar 0}\varphi(t,\delta)=0$, then 
  \[
        \lim_{\alpha \rar \infty} \alpha\int_0^{t} e^{-\alpha s }\varphi(t,s) ds =0,
      \]
\end{itemize}
\end{lemma}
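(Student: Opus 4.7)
The plan for part (a) is to carry out an Abelian-type argument via the change of variables $u=\alpha s$.  For each fixed $t>0$, writing
\[
\psi_\alpha(t) := \alpha\int_0^t e^{-\alpha s}\varphi(t,s)\,ds = \int_0^{\alpha t} e^{-u}\,\varphi(t,u/\alpha)\,du,
\]
the monotonicity of $\varphi(t,\cdot)$ ensures that for each fixed $u>0$ the quantity $\varphi(t,u/\alpha)$ is decreasing in $\alpha$, with well-defined limit $\varphi(t,0^+):=\lim_{\delta\rar 0^+}\varphi(t,\delta)$.  On $[0,\alpha t]$ the integrand is bounded by the integrable function $e^{-u}\varphi(t,t)$, so dominated convergence yields
\[
\lim_{\alpha\rar \infty}\psi_\alpha(t)=\varphi(t,0^+)
\]
for every $t>0$.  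Under hypothesis (i) the left-hand side vanishes by assumption, so $\varphi(t,0^+)=0$ for every $t>0$, which is in fact stronger than what is claimed.

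Under hypothesis (ii), the pointwise limit just established and Fatou's lemma applied to the non-negative family $e^{-\beta t}\psi_\alpha(t)$ give, for every $\beta>0$,
\[
\int_0^\infty e^{-\beta t}\varphi(t,0^+)\,dt\;\le\; \liminf_{\alpha\rar\infty}\int_0^\infty e^{-\beta t}\psi_\alpha(t)\,dt\;=\;0.
\]
Since $\varphi(t,0^+)\ge 0$ and its Laplace transform vanishes identically on $(0,\infty)$, uniqueness of the Laplace transform for non-negative measurable functions forces $\varphi(t,0^+)=0$ for almost every $t>0$, which is the desired conclusion.

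Part (b) is a standard cut-off argument.  Given $\eps>0$, the assumption $\lim_{\delta\rar 0}\varphi(t,\delta)=0$ provides $\delta_0\in(0,t)$ with $\varphi(t,s)<\eps$ on $[0,\delta_0]$, and splitting the integral at $\delta_0$ gives
\[
\alpha\int_0^t e^{-\alpha s}\varphi(t,s)\,ds\;\le\; \eps\,\alpha\int_0^{\delta_0}e^{-\alpha s}\,ds\;+\;K\alpha\int_{\delta_0}^t e^{-\alpha s}\,ds\;\le\;\eps\bigl(1-e^{-\alpha\delta_0}\bigr)+K e^{-\alpha\delta_0}.
\]
Letting $\alpha\rar\infty$ and then $\eps\rar 0$ closes the proof.

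The only delicate point is the dominated-convergence step in part (a): it hinges on using the monotonicity of $\varphi(t,\cdot)$ to bound $\varphi(t,u/\alpha)$ uniformly on $[0,\alpha t]$ by $\varphi(t,t)$, which furnishes the integrable majorant $e^{-u}\varphi(t,t)$.  The Fubini/Fatou exchange in hypothesis (ii) and the splitting in (b) are entirely routine.
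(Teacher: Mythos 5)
Your proof is correct, and for part (a) it takes a slightly different (and somewhat stronger) route than the paper. For (i) the paper argues entirely from the one-sided bound $\varphi(t,s)\geq\varphi(t,0)$, writing $\varphi(t,0)=\lim_\alpha\alpha\int_0^t e^{-\alpha s}\varphi(t,0)\,ds\leq\lim_\alpha\alpha\int_0^t e^{-\alpha s}\varphi(t,s)\,ds=0$, which is a one-liner and avoids dominated convergence altogether. You instead substitute $u=\alpha s$ and invoke DCT to prove the sharp Abelian identity $\lim_{\alpha\rar\infty}\psi_\alpha(t)=\varphi(t,0^+)$ for \emph{every} $t$; this is a genuinely stronger intermediate fact, and you then reuse it in (ii), applying Fatou to $e^{-\beta t}\psi_\alpha(t)$ directly. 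The paper's route to (ii) is parallel but applies Fatou (together with the monotone-convergence observation that $1-e^{-\alpha t}\uparrow1$) to $(1-e^{-\alpha t})e^{-\beta t}\varphi(t,0)$ built again from the lower bound, and then recognizes $\alpha\int_0^t e^{-\alpha s}ds=1-e^{-\alpha t}$. Both arguments are elementary and correct; yours buys the exact limit and a cleaner unification of (i) and (ii), at the mild cost of invoking DCT where the paper makes do with a monotone estimate. (Your closing remark that uniqueness of the Laplace transform is needed is overkill: $\int_0^\infty e^{-\beta t}\varphi(t,0^+)\,dt=0$ with a nonnegative integrand already forces $\varphi(t,0^+)=0$ a.e.\ for any single fixed $\beta$.) Part (b) is the same cut-off argument as in the paper.
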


\begin{proof}
\begin{itemize}

\item[a.] First, observe that, since $\varphi(t,\cdot)$ is increasing and non-negative, we have 
$$
  0\leq \varphi(t,0)\leq \varphi(t,\delta), \, \forall\delta\geq 0 
$$ and therefore 
\begin{itemize}
      
      \item[(i)] 
      \[
        \varphi(t,0)=\lim_{\alpha \rar \infty} \alpha\int_0^{t} e^{-\alpha s }\varphi(t,0) ds\leq\lim_{\alpha \rar \infty} \alpha\int_0^{t} e^{-\alpha s }\varphi(t,s) ds =0.
      \]
  
       \item[(ii)] Due to Fatou's Lemma and the fact that $1-e^{ -\alpha t}$ is increasing in $\alpha$
      \bean
           0\leq\int_0^{\infty}e^{ -\beta t}\varphi(t,0) dt &=&\int_0^{\infty} \liminf_{\alpha \rar \infty} \left(1-e^{ -\alpha t}\right)e^{ -\beta t}\varphi(t,0) dt \\
           &\leq & \lim_{\alpha \rar \infty} \int_0^{\infty} \left(1-e^{ -\alpha t}\right)e^{ -\beta t}\varphi(t,0) dt\\
            &=& \lim_{\alpha \rar \infty} \alpha\int_0^{\infty} \int_0^t e^{-\alpha s -\beta t}\varphi(t,0) dsdt \\
         && \leq \lim_{\alpha \rar \infty} \alpha\int_0^{\infty} \int_0^t e^{-\alpha s -\beta t}\varphi(t,s) dsdt =0. 
      \eean
      
      Therefore, we have 
      $$
        \int_0^{\infty}e^{ -\beta t}\varphi(t,0) dt=0,
      $$
      which implies that $\varphi(t,0)=0$ for almost every $t>0$.
      
      \end{itemize}
  \item[b.] On the other hand, consider $\varepsilon>0$. As $\lim_{\delta\rar 0}\varphi(t,\delta)=0$, there exists $\delta>0$ such that $\varphi(t,s)<\varepsilon$ for all $s\leq \delta$. Then we will have 
  \bean
    0\leq \lim_{\alpha \rar \infty} \alpha\int_0^{t} e^{-\alpha s }\varphi(t,s) ds & \leq &\lim_{\alpha \rar \infty} \alpha\left[\int_{\delta}^{t} e^{-\alpha s }\varphi(t,s) ds+\varepsilon \int_0^{\delta} e^{-\alpha s } ds \right]\\
    & \leq &\lim_{\alpha \rar \infty}\left[K\left(e^{-\alpha \delta }-e^{-\alpha t }\right)+\varepsilon \left(1- e^{-\alpha \delta}\right)\right]=\varepsilon.
  \eean
  The conclusion follows due to arbitrariness of $\varepsilon$.
 \end{itemize}
\end{proof}
\end{document}